\def\BibTeX{{\rm B\kern-.05em{\sc i\kern-.025em b}\kern-.08em
    T\kern-.1667em\lower.7ex\hbox{E}\kern-.125emX}}
\def\BibTeX{{\rm B\kern-.05em{\sc i\kern-.025em b}\kern-.08em
    T\kern-.1667em\lower.7ex\hbox{E}\kern-.125emX}}
\tikzset{join/.code=\tikzset{after node path={%
\ifx\tikzchainprevious\pgfutil@empty\else(\tikzchainprevious)%
edge[every join]#1(\tikzchaincurrent)\fi}}}
\tikzset{>=stealth',every on chain/.append style={join},
         every join/.style={->}}
\tikzstyle{labeled}=[execute at begin node=$\scriptstyle,
\newtheorem{Def}{Definition}
\newtheorem{Thm}{Theorem}
\newtheorem{Cor}[Thm]{Corollary}
\newtheorem{Lem}[Thm]{Lemma}
\newtheorem{Pro}[Thm]{Proposition}
\begin{document}

\title{Syntactic characterizations of classes of first-order\\ structures in mathematical fuzzy logic
}
\subtitle{}


\author{Guillermo Badia         \and
        Vicent Costa \and Pilar Dellunde \and Carles Noguera 
}

\authorrunning{G. Badia, V. Costa, P. Dellunde, C. Noguera} 

\institute{G. Badia \at
Department of Knowledge-Based Mathematical Systems \\
Johannes Kepler University Linz, Austria,\\
School of Historical and Philosophical Inquiry\\
 University of Queensland, Australia,\\
\email{guillebadia89@gmail.com}           
 \and
V. Costa \at
Department of Philosophy \\    
Universitat Aut\`onoma de Barcelona, Catalonia,\\
\email{Vicente.Costa@uab.cat}  
\and
P. Dellunde \at 
Department of Philosophy \\ 
Universitat Aut\`onoma de Barcelona, Catalonia,\\
Artificial Intelligence Research Institute (IIIA -- CSIC), Catalonia,\\
Barcelona Graduate School of Mathematics, Catalonia\\    
\email{pilar.dellunde@uab.cat}
\and
C. Noguera \at
Institute of Information Theory and Automation,\\    
Czech Academy of Sciences, Czech Republic,\\
\email{noguera@utia.cas.cz}           
}

\date{This paper is dedicated to Llu\'is Godo in the occasion of his 60th birthday}

\maketitle

\begin{abstract}
This paper is a contribution to graded model theory, in the context of mathematical fuzzy logic. We study characterizations of classes of graded structures in terms of the syntactic form of their first-order axiomatization. We focus on classes given by universal and universal-existential sentences. In particular, we prove two amalgamation results using the technique of diagrams in the setting of structures valued on a finite MTL-algebra, from which analogues of the  \L o\'s--Tarski and the Chang--\L o\'s--Suszko preservation theorems follow.
\keywords{graded model theory \and mathematical fuzzy logic \and universal classes \and universal-existential classes \and amalgamation theorems \and preservation theorems
 }
\end{abstract}

\section{Introduction}

Graded model theory is the generalized study, in mathematical fuzzy logic (MFL), of the construction and classification of graded structures. The field was properly started in~\cite{Ha06} and has received renewed attention in recent years \cite{Badia-Noguera:Fraisse,Ba13,Cintula-Metcalfe:HerbrandLPAR,Cintula-Diaconescu-Metcalfe:SkolemLPAR,CoDe16,De11,De14}. Part of the programme of graded model theory is to find non-classical analogues of results from classical model theory (e.g.,\cite{h, sacks, Chang73}). This will not only provide generalizations of classical theorems but will also provide insight into what avenues of research are particular to classical first-order logic and do not make sense in a broader setting.

On the other hand, classical model theory was developed together with the analysis of some very relevant mathematical structures. In consequence, its principal results provided a logical interpretation of such structures. Thus, if we want model theory's idiosyncratic interaction with other disciplines to be preserved, the redefinition of the fundamental notions of graded model theory cannot be obtained from directly fuzzifying every classical concept. Quite the contrary, the experience acquired in the study of different structures, the results obtained using specific classes of structures, and the potential overlaps with other areas should determine the light the main concepts of graded model theory have to be defined in. It is in this way that several fundamental concepts of the model theory of mathematical fuzzy logic have already appeared in the literature.

The goal of this paper is to give syntactic characterizations of classes of graded structures; more precisely, we want to study which kind of formulas can be used to axiomatize certain classes of structures based on finite (expansions of) \MTL-chains. Traditional examples of such sort of results are preservation theorems in classical model theory, which, in general, can be obtained as consequences of certain amalgamation properties (cf. \cite{h}). We provide some amalgamation results using the technique of diagrams which will allow us to establish analogues of the \L o\'s--Tarski preservation theorem~\cite[Theorem 6.5.4]{h} and the Chang--\L o\'s--Suszko theorem~\cite[Theorem 6.5.9]{h}. 

This is not the first work that addresses a model-theoretic study of the preservation and characterization of classes of fuzzy structures. Indeed, Bagheri and Moniri \cite{Ba13} have obtained results for the particular case of continuous model theory by working over the standard \MV-algebra $[0,1]_{\text{\L}}$ and with a predicate language enriched with a truth-constant for each element of $[0,1]_{\text{\L}}$. In that context, they characterize universal theories in terms of the preservation under substructures \cite[Prop. 5.1]{Ba13}, and prove versions of the Tarski--Vaught theorem \cite[Prop. 4.6]{Ba13} and of the Chang--\L o\'s--Suszko theorem \cite[Prop. 5.5]{Ba13}.

The connection between classical model theory and the study of classes of fuzzy structures needs to be clarified. Namely, as explained and developed in previous papers~\cite{Cintula-EGGMN:DistinguishedSemantics,DeGaNo16,DeGaNo18}, there is a translation of fuzzy structures into classical many-sorted structures, more precisely, two-sorted structures with one sort for the first-order domain and another accounting for truth-values in the algebra. Such connection certainly allows to directly import to the fuzzy setting several classical results, but, as already noted in the mentioned papers, it does not go a long way. Indeed, the translation does not preserve the syntactical complexity of sentences (regarding quantifiers) and, hence, it cannot be used for syntactically-sensitive results, such as those studied in the present paper. 

The paper is structured as follows: in section 1, we introduce the syntax and semantics of fuzzy predicate logics. In section~\ref{s:prelim},
several fuzzy model-theoretic notions such as homomorphisms or the method of diagrams are presented. In section~\ref{s:universal}, we study the preservation of universal formulas, obtain an existential form of amalgamation and derive from it an analogue of the \L o\'s--Tarski theorem. In section~\ref{s:universal-existential}, we study classes given by universal-existential sentences by showing that such formulas are preserved under unions of chain, obtaining another corresponding amalgamation result and a version of Chang--\L o\'s--Suszko preservation theorem. We end with some concluding remarks and suggestions for lines of further research.

\section{Preliminaries}\label{section preliminaries}\label{s:prelim}

In this section we introduce the syntax and semantics of fuzzy predicate logics, and recall the basic results on diagrams we will use in the paper. We use the notation and definitions of the Handbook of Mathematical Fuzzy Logic~\cite{CiFeHaNo11}.

\begin{Def} \emph{(Syntax of Predicate Languages)}
 A \emph{predicate language} $\mathcal{P}$ is a  triple $\left\langle Pred_{\mathcal{P}},Func_{\mathcal{P}},Ar_{\mathcal{P}} \right\rangle$, where $Pred_{\mathcal{P}}$ is a non-empty set of \emph{predicate symbols}, $Func_{\mathcal{P}}$ is a set of \emph{function symbols} (disjoint from $Pred_{\mathcal{P}}$), and $Ar_{\mathcal{P}}$ represents the \emph{arity function}, which assigns a natural number to each predicate symbol or function symbol. We call this natural number the \emph{arity} of the symbol. The predicate symbols with arity zero are called \emph{truth constants}, while the function symbols whose arity is zero are named \emph{object constants} (\emph{constants} for short). 
\end{Def} 

$\mathcal{P}$-terms, $\mathcal{P}$-formulas, $\forall_n$ and $\exists_n$ $\mathcal{P}$-formulas, and the notions of free occurrence of a variable, open formula, substitutability, and sentence are defined as in classical predicate logic. A theory is a set of sentences. When it is clear from the context, we will refer to $\mathcal{P}$-terms and $\mathcal{P}$-formulas simply as \emph{terms} and \emph{formulas}. 

Let MTL stand for the monoidal t-norm based logic introduced by Esteva and Godo~\cite{go}.
Throughout the paper, we consider the predicate logic MTL$\forall$ (for a definition of the axiomatic system for MTL$\forall$ we refer the reader to~\cite[Def. 5.1.2, Ch. I]{CiFeHaNo11}).
Let us recall that the deduction rules of MTL$\forall$ are those of MTL and the rule of generalization: from $\varphi$ infer $(\forall x)\varphi$. 
The definitions of proof and provability are analogous to the classical ones. 
We denote by $\Phi\vdash_{\text{MTL}\forall}\varphi$ the fact that $\varphi$ is provable in  MTL$\forall$ from the set of formulas $\Phi$. 
For the sake of clarity, when it is clear from the context we will write $\vdash$ to refer to $\vdash_{\text{MTL}\forall}$. The algebraic semantics of MTL$\forall$ is based on {\em \MTL-algebras}~\cite{go}.

$\alg{A}$ is called an {\em \MTL-chain} if its underlying lattice is linearly ordered. Since it is costumary to consider fuzzy logicsin languages expanding that of \MTL, henceforth, we will confine our attention to algebras which are expansions of \MTL-chains of such kind and just call them {\em chains}.

\begin{Def} \emph{(Semantics of Predicate Fuzzy Logics~\cite[Def. 5.2.1, Ch. I]{CiFeHaNo11})}  \label{evaluation} Consider a predicate language $\mathcal{P}=\langle Pred_{\mathcal{P}}, Func_{\mathcal{P}}, Ar_{\mathcal{P}} \rangle$ and let \textbf{A} be a chain. We define an $\textbf{A}$\emph{-structure} $\mathrm{\mathbf{M}}$ for $\mathcal{P}$ as a pair $\model{M} = \tuple{\alg{A},\struct{M}}$ where $$\struct{M}= \langle M, (P_M)_{P\in Pred}, (F_M)_{F\in Func} \rangle,$$ where $M$ is a non-empty domain, $P_{\mathrm{\mathbf{M}}}$ is an $n$-ary fuzzy relation for each $n$-ary predicate symbol, i.e., a function from $M^n$ to $A$, identified with an element of $\textbf{A}$ if $n=0$; and $F_{\mathrm{\mathbf{M}}}$ is a function from $M^n$ to $M$, identified with an element of $M$ if $n=0$. As usual, if\/ $\mathrm{\mathbf{M}}$ is an $\textbf{A}$-structure for $\mathcal{P}$, an $\mathrm{\mathbf{M}}$-evaluation of the object variables is a mapping $v$ assigning to each object variable an element of $M$. The set of all object variables is denoted by $Var$. If $v$ is an $\mathrm{\mathbf{M}}$-evaluation, $x$ is an object variable and $d\in M$, we denote by $v[x\mapsto d]$ the $\mathrm{\mathbf{M}}$-evaluation so that $v[x\mapsto d](x)=d$ and $v[x\mapsto d](y)=v(y)$ for $y$ an object variable such that $y\not=x$. If\/ $\mathrm{\mathbf{M}}$ is an $\textbf{A}$-structure and $v$ is an $\mathrm{\mathbf{M}}$-evaluation, we define the \emph{values} of terms and the \emph{truth values} of formulas in $M$ for an evaluation $v$ recursively as follows:
 
\begin{itemize}
\item[] $\|x\|^{{\boldsymbol{A}}}_{\mathbf{M},v}=v(x)$;

\item[] $\|F(t_1,\ldots,t_n)\|^{{\boldsymbol{A}}}_{\mathbf{M},v}=F_{\mathbf{M}}(\|t_1\|^{{\boldsymbol{A}}}_{\mathbf{M},v},\ldots,\|t_n\|^{{\boldsymbol{A}}}_{\mathbf{M},v})$, \newline for $F\in Func$;

\item[] $\|P(t_1,\ldots,t_n)\|^{{\boldsymbol{A}}}_{\mathbf{M},v}=P_{\mathbf{M}}(\|t_1\|^{{\boldsymbol{A}}}_{\mathbf{M},v},\ldots,\|t_n\|^{{\boldsymbol{A}}}_{\mathbf{M},v})$, \newline for $P\in Pred$;

\item[] $\|c(\varphi_1,\ldots,\varphi_n)\|^{{\boldsymbol{A}}}_{\mathbf{M},v}=\circ^{\boldsymbol{A}}(\|\varphi_1\|^{{\boldsymbol{A}}}_{\mathbf{M},v},\ldots,\|\varphi_n\|^{{\boldsymbol{A}}}_{\mathbf{M},v})$, \newline for $\circ\in\mathcal{L}$;

\item[] $\|(\forall x)\varphi\|^{{\boldsymbol{A}}}_{\mathbf{M},v}=inf_{\leq^{\boldsymbol{A}}}\{\|\varphi\|^{{\boldsymbol{A}}}_{\mathbf{M},v[x\rightarrow d]}\mid d\in M\}$;

\item[] $\|(\exists x)\varphi\|^{{\boldsymbol{A}}}_{\mathbf{M},v}=sup_{\leq^{\boldsymbol{A}}}\{\|\varphi\|^{{\boldsymbol{A}}}_{\mathbf{M},v[x\rightarrow d]}\mid d\in M\}$.
\end{itemize}
\end{Def} 

For a set of formulas $\Phi$, we write $\|\Phi\|^{\boldsymbol{A}}_{\mathbf{M},v}=1$,  if $\|\varphi\|^{\boldsymbol{A}}_{\mathbf{M},v}=1$ for every $\varphi\in\Phi$. We denote by $\|\varphi\|^{\boldsymbol{A}}_{\mathbf{M}}=1$ the fact that $\|\varphi\|^{\boldsymbol{A}}_{\mathbf{M},v}=1$ for all $\mathbf{M}$-evaluations $v$. We say that $\langle\boldsymbol{A},\mathbf{M}\rangle$ is a \emph{model of a set of formulas $\Phi$}, if $\|\varphi \|^{\boldsymbol{A}}_{\mathbf{M}}=1$ for any $\varphi\in\Phi$. Sometimes we will denote by $\overrightarrow{x}$ a sequence of variables $x_1,\ldots,x_n$ (and the same with sequences $\overrightarrow{d}$ of elements of the domain). Given a structure $\langle\boldsymbol{A},\mathbf{M}\rangle$ and a formula $\varphi(\overrightarrow{x})$, we say that $\overrightarrow{d}\subseteq M$ \emph{satisfies} $\varphi(\overrightarrow{x})$ (or that $\varphi(\overrightarrow{x})$ is \emph{satisfied} by $\overrightarrow{d}$) if $\semvalue{\varphi(\overrightarrow{x})}^{\emph{\textbf{A}}}_{\textbf{M},v[\overrightarrow{x}\rightarrow \overrightarrow{d}]}=\1^\alg{A}$ for any $\textbf{M}$-evaluation $v$ (also written $\semvalue{\varphi [\overrightarrow{d}]}^{\emph{\textbf{A}}}_{\textbf{M}}=\1^\alg{A}$); for the sake of clarity we will use also the notation $\tuple{\alg{A}, \struct{M}}\models \f[\overrightarrow{d}]$ when is needed.
Two theories $T$ and $U$ are said to be \emph{1-equivalent} if a structure is  a model of $T$ if it is also a model of $U$ (in the case where $T$ and $U$ are singletons of formulas, we say that these formulas are 1-equivalent).

Given a set of sentences $\Sigma$, and a sentence $\phi$, we denote by $\Sigma \models_{\boldsymbol{A}}\phi$ the fact that every $\boldsymbol{A}$-model of $\Sigma$ is also an $\boldsymbol{A}$-model of $\phi$. We focus on classes of structures over a fixed finite chain \emph{\textbf{A}} whose set of elements is denoted by $\{a_1,\ldots,a_k\}$. Such restriction is due to the fact that dropping finiteness can cause to lose compactness, which is an essential element of our proofs. However, the results will still be quite encompassing in practice.  Indeed, for instance, prominent examples of weighted structures in computer science are valued over finite chains. Structures over a fixed finite chain \emph{\textbf{A}} have two important properties: they are witnessed (the values of the quantifiers are maxima and minima achieved in particular instances), and have the compactness property, both for satisfiability and for consequence (see e.g.~\cite{De14}). 
\begin{Pro}  Let $\boldsymbol{A}$ be a fixed finite chain. For every set of sentences $\Sigma \cup \{\alpha\}$, the following holds: 
\begin{enumerate}
\item If every finite subset $\Sigma_0\subseteq\Sigma$ has a model $\langle\textbf{A},\emph{\textbf{M}}_{\Sigma_0}\rangle$, then $\Sigma$ has a model $\langle\textbf{A},\emph{\textbf{N}}\rangle$. 
\item If $\Sigma \models_{\boldsymbol{A}} \alpha$, then there is a finite subset $\Sigma_0\subseteq\Sigma$ such that $\Sigma_0 \models_{\boldsymbol{A}} \alpha$.
\end{enumerate}
\end{Pro}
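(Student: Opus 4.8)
The plan is to reduce both claims to classical first-order compactness by means of a translation that preserves \emph{models} (though not quantifier complexity, so it would be useless for the syntactically sensitive results of later sections, but is perfectly adequate here). The essential ingredient is the finiteness of $A=\{a_1,\dots,a_k\}$, which, since $\boldsymbol A$ is a chain, we may assume linearly ordered as $a_1<\cdots<a_k=1$.

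First I would translate the structures. Given the predicate language $\mathcal P$, introduce a classical (crisp) one-sorted language $\mathcal P^{*}$ having, for every $n$-ary predicate symbol $P$ of $\mathcal P$ and every $i\le k$, an $n$-ary predicate $P_i$ (read ``$P$ takes the value $a_i$''), and retaining all the function symbols of $\mathcal P$. Let $\Delta$ be the classical theory stating, for each $P$, that $P_1,\dots,P_k$ partition the $n$-tuples (each tuple satisfies exactly one $P_i$). There is then an obvious domain-preserving bijection between $\boldsymbol A$-structures $\langle\boldsymbol A,\textbf M\rangle$ for $\mathcal P$ and classical models of $\Delta$ for $\mathcal P^{*}$: the fuzzy relation $P_{\textbf M}\colon M^{n}\to A$ corresponds to the partition $\{\bar d: P_{\textbf M}(\bar d)=a_i\}_{i\le k}$. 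Next I would translate formulas, defining by simultaneous induction on $\varphi$, for each $i\le k$, a crisp formula $[\varphi=a_i](\bar x)$ meant to hold in a classical model of $\Delta$ exactly when $\|\varphi[\bar x]\|^{\boldsymbol A}_{\textbf M}=a_i$. For atoms, $[P(\bar t)=a_i]:=P_i(\bar t)$; for a connective $c$ with truth function $c^{\boldsymbol A}$,
\[
[c(\varphi_1,\ldots,\varphi_n)=a_i]\ :=\ \bigvee\{\,[\varphi_1=a_{j_1}]\wedge\cdots\wedge[\varphi_n=a_{j_n}] : c^{\boldsymbol A}(a_{j_1},\ldots,a_{j_n})=a_i\,\};
\]
and for quantifiers, using that structures over a finite chain are witnessed (so the defining infimum and supremum are minima and maxima that are actually attained),
\[
[(\forall x)\varphi=a_i]\ :=\ (\forall x)\bigvee_{j\ge i}[\varphi=a_j]\ \wedge\ (\exists x)[\varphi=a_i],
\]
and dually for $(\exists x)\varphi$ with $\bigvee_{j\le i}$. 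A routine induction then shows that a model of $\Delta$ satisfies $[\varphi=a_i][\bar d]$ iff the associated $\boldsymbol A$-structure has $\|\varphi[\bar d]\|^{\boldsymbol A}_{\textbf M}=a_i$. Writing $\varphi^{*}:=[\varphi=a_k]$, we conclude that $\langle\boldsymbol A,\textbf M\rangle$ is a model of $\varphi$ in the sense of the semantics above iff its classical counterpart satisfies $\varphi^{*}$.

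With this in hand I would invoke classical compactness. For part (1), if every finite $\Sigma_0\subseteq\Sigma$ has an $\boldsymbol A$-model, then, passing to counterparts, every finite subset of $\Sigma^{*}\cup\Delta$ (where $\Sigma^{*}=\{\varphi^{*}:\varphi\in\Sigma\}$) is classically satisfiable; by classical compactness $\Sigma^{*}\cup\Delta$ has a model, whose associated $\boldsymbol A$-structure $\langle\boldsymbol A,\textbf N\rangle$ is a model of $\Sigma$. For part (2), the translation gives $\Sigma\models_{\boldsymbol A}\alpha$ iff $\Sigma^{*}\cup\Delta\models\alpha^{*}$; classical compactness for the consequence relation yields a finite $\Gamma\subseteq\Sigma^{*}\cup\Delta$ with $\Gamma\models\alpha^{*}$, and collecting the finitely many sentences of $\Sigma$ whose translations occur in $\Gamma$ produces a finite $\Sigma_0\subseteq\Sigma$ with $\Sigma_0\models_{\boldsymbol A}\alpha$.

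The one delicate point is the quantifier clause of the translation, and it is exactly there that finiteness is indispensable: the witnessing property turns the defining infimum and supremum into attained minima and maxima, so that ``$=a_i$'' reduces to the finite Boolean condition displayed above; over an infinite chain neither the attainment nor the finiteness of these disjunctions survives, the translation collapses, and---consistently with the remark preceding the proposition---compactness may genuinely fail. An alternative, equally finiteness-dependent route would bypass the translation and build the model in part (1) directly as an ultraproduct $\prod_{\Sigma_0}\textbf M_{\Sigma_0}/\mathcal U$ over a suitable ultrafilter, the required \L o\'s-style lemma again holding because finite witnessed structures let the quantifiers commute with the ultraproduct.
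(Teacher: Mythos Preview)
Your argument is correct. The translation to a one-sorted classical language with value-indexed predicates is sound, the inductive clauses are right (the quantifier case works precisely because over a finite chain every infimum and supremum over the domain lands in the finite set $A$ and is therefore attained), and the appeal to classical compactness then goes through cleanly for both items.

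As for comparison with the paper: the paper does not actually give a proof of this proposition; it simply records the result and points to~\cite{De14}, whose method---as the title indicates---is the ultraproduct construction you sketch at the end as an alternative. So your principal route (the finitary translation) is genuinely different from the one the paper leans on. The ultraproduct proof has the advantage of staying inside the graded semantics and yielding a \L o\'s theorem that is reused elsewhere (e.g.\ in characterizing elementary classes); your translation, by contrast, is arguably more elementary and makes the role of finiteness of $\boldsymbol A$ completely transparent, but---as you rightly flag and as the paper itself warns in the Introduction when discussing the two-sorted translation---it destroys quantifier complexity and therefore cannot be recycled for the preservation theorems of Sections~\ref{s:universal} and~\ref{s:universal-existential}. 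Both approaches are adequate for this proposition; they just trade reusability for directness.
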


From now on we refer to \emph{\textbf{A}}-structures simply as \emph{structures} (or as {\em $\mathcal{P}$-structures} if we need to specify the language). For the remainder of the article, let us assume that we have a crisp identity $\approx$ in the language. 

\begin{Def} \label{def:mapping structures}
$\space$ Let $\mathcal{P}$ be a predicate language, $\langle\textbf{A},\mathrm{\mathbf{M}}\rangle$ and $\langle\textbf{B},\mathrm{\mathbf{N}}\rangle$ structures for $\pl$, $f$ a mapping from $\textbf{A}$ to $\textbf{B}$ and $g$ a mapping from $M$ to $N$. The pair $\langle f,g\rangle$ is said to be a \emph{strong homomorphism} from $\langle\textbf{A},\mathrm{\mathbf{M}}\rangle$ to $\langle\textbf{B},\mathrm{\mathbf{N}}\rangle$ if $f$ is an algebraic homomorphism and for every $n$-ary function symbol $F\in\mathcal{P}$ and $d_1,\ldots,d_n\in M$,
$$g(F_{\mathrm{\mathbf{M}}}(d_1,\ldots,d_n))=F_{\mathrm{\mathbf{N}}}(g(d_1),\ldots,g(d_n)) $$

\noindent and for every $n$-ary predicate symbol $P\in\mathcal{P}$ and \newline $d_1,\ldots,d_n\in M,$ \begin{center}

$ f(\semvalue{P(d_1\ldots,d_n)}^{\textbf{A}}_{{\mathrm{\mathbf{M}}}})= \semvalue{P(g(d_1),\ldots,g(d_n))}^{\textbf{B}}_{{\mathrm{\mathbf{N}}}}.$
\end{center}

A strong homomorphism $\langle f,g\rangle$ is said to be \emph{elementary}  if we have, for every $\pl$-formula $\varphi(x_1,\ldots,x_n)$ and $d_1,\ldots,d_n\in M$,
$$  f(\semvalue{\varphi(d_1\ldots,d_n)}^{\textbf{A}}_{\mathrm{\mathbf{M}}})=\semvalue{\f (g(d_1),\ldots,g(d_n)}^{\textbf{B}}_{\mathrm{\mathbf{N}}}.$$
\end{Def}  

Let $\langle f,g\rangle$ be a strong homomorphism from $\langle\emph{\textbf{A}},\mathrm{\mathbf{M}}\rangle$ to $\langle\emph{\textbf{B}},\mathrm{\mathbf{N}}\rangle$, we say that $\langle f,g\rangle$ is an \emph{embedding} from $\langle\emph{\textbf{A}},\mathrm{\mathbf{M}}\rangle$ to $\langle\emph{\textbf{B}},\mathrm{\mathbf{N}}\rangle$ if both functions $f$ and $g$ are injective, and we say that $\langle f,g\rangle$ is an \emph{isomorphism} from $\langle\emph{\textbf{A}},\mathrm{\mathbf{M}}\rangle$ to $\langle\emph{\textbf{B}},\mathrm{\mathbf{N}}\rangle$ if $\langle f,g\rangle$ is an embedding and both functions $f$ and $g$ are surjective. For a general study of different kinds of homomorphisms and the formulas they preserve we refer to \cite{DeGaNo16}.

Later in the article we will use diagram techniques. We present here some corollaries of the results obtained in \cite{De11}. Given a language $\mathcal{P}$, we start by introducing three different expansions adding either a new truth-constant for each elements of the algebra, or new object constants. For any element $a$ of $\alg{A}$, we will use the truth-constant $\overline{a}$ to denote it. When $a=\1^\alg{A}$ or $a=\0^\alg{A}$, then $\overline{a}=\1$ or $\overline{a}=\0$, respectively.

\begin{Def} \label{lalg}  Given a predicate language $\mathcal{P}$, we expand it by adding an individual constant symbol $c_m$ for every $m\in M$, and denote it by $\mathcal{P^{\mathbf{M}}}$. If $\langle\textbf{A},\mathrm{\mathbf{M}}\rangle$ is a $\mathcal{P^{\mathbf{M}}}$-structure, we denote by $\langle\textbf{A},\mathrm{\mathbf{M^\sharp}}\rangle$ the expansion of the structure $\langle\textbf{A},\mathrm{\mathbf{M}}\rangle$ to  $\mathcal{P^{\mathbf{M}}}$, where for every $m\in M$, $(c_m)_\mathrm{\mathbf{\emph{\textbf{M}}^\sharp}}=m$.
\end{Def} 

\begin{Def} \label{lalg} Given a predicate language $\mathcal{P}$, we expand it by adding a truth constant symbol $\overline{a}$ for every $a \in A$, and denote it by $\mathcal{P^{\textbf{A}}}$. When we expand the language $\mathcal{P^{\textbf{A}}}$ further by adding an individual constant symbol $c_m$ for every $m\in M$, we will denote it by $\mathcal{P^{\langle\textbf{A},\mathrm{\mathbf{M}}\rangle}}$. 
\end{Def} 

\begin{Def} \label{ElDiag} Let $\mathcal{P}$ be a predicate language and $\langle\textbf{A},\mathrm{\mathbf{M}}\rangle$ a $\mathcal{P}$-structure.  We define the following sets of $\mathcal{P^{\langle\textbf{A},\mathrm{\mathbf{M}}\rangle}}$-sentences:
$$ \emph{ElDiag}(\textbf{A},\emph{\textbf{M}})=\{ \sigma\leftrightarrow \overline{a} \mid \sigma \text{ is a sentence of } \mathcal{P^{\mathbf{M}}},  a\in A$$ 
$$\mbox{and } \ \semvalue{\sigma}^{\textbf{A}}_{\mathbf{M^{\sharp}}}=a\},$$
whereas $\emph{Diag}(\textbf{A},\emph{\textbf{M}})$ is the subset of\/ $\emph{ElDiag}(\textbf{A},\emph{\textbf{M}})$ containing all formulas $\sigma\leftrightarrow \overline{a}$ where $\sigma$ is quantifier-free.
\end{Def} 

Following the same lines of the proof of \cite[Prop. 32]{De11}, we can obtain a characterization of strong and elementary embeddings between two $\mathcal{P}$-structures over a chain $\alg{A}$.

\begin{Cor}\label{lemma cor} Let $\langle\textbf{A},\mathrm{\mathbf{M}}\rangle$ and $\langle\textbf{A},\mathrm{\mathbf{N}}\rangle$ be two $\mathcal{P}$-structures for $\mathcal{P^{\textbf{A}}}$. The following are equivalent:
\begin{enumerate}
\item There is an expansion of $\langle\textbf{A},\mathrm{\mathbf{N}}\rangle$ that is a model of \emph{Diag}$(\textbf{A},\emph{\textbf{M}})$ (\emph{ElDiag}$(\textbf{A},\emph{\textbf{M}})$, respectively).
\item There is a mapping $g\colon M \to N$ such that $\langle Id_\textbf{A},g \rangle$ is a strong (elementary, respectively) embedding from $\langle\textbf{A},\mathrm{\mathbf{M}}\rangle$ into $\langle\textbf{A},\mathrm{\mathbf{N}}\rangle$.
\end{enumerate}
\end{Cor}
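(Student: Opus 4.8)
The plan is to prove the two equivalences (the strong-embedding case and the elementary-embedding case) by adapting the classical diagram argument, which is really the same argument run with two different diagram sets. I would establish the biconditional by proving both directions, treating $\mathrm{Diag}$ and $\mathrm{ElDiag}$ in parallel since the reasoning is structurally identical and only differs in the class of sentences $\sigma$ allowed (quantifier-free versus arbitrary).

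For the direction $(2)\Rightarrow(1)$, suppose we have a mapping $g\colon M\to N$ such that $\langle Id_{\textbf{A}},g\rangle$ is a strong (resp.\ elementary) embedding. The natural move is to expand $\langle\textbf{A},\mathbf{N}\rangle$ to a $\mathcal{P}^{\langle\textbf{A},\mathbf{M}\rangle}$-structure by interpreting each new constant $c_m$ as $g(m)$. I then need to check that under this interpretation every sentence $\sigma\leftrightarrow\overline{a}$ in $\mathrm{Diag}(\textbf{A},\mathbf{M})$ (resp.\ $\mathrm{ElDiag}(\textbf{A},\mathbf{M})$) evaluates to $\1^{\textbf{A}}$. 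This reduces to showing that for the relevant $\sigma$, the value $\semvalue{\sigma}^{\textbf{A}}_{\mathbf{M}^\sharp}$ is transported correctly to $\langle\textbf{A},\mathbf{N}\rangle$ with the $c_m$'s sent to $g(m)$; since $f=Id_{\textbf{A}}$, the embedding condition says precisely that $g$ preserves the truth values of atomic formulas (strong case) or of all formulas (elementary case), and a routine induction on the structure of $\sigma$ extends this to the full quantifier-free (resp.\ arbitrary) sentence, using that $Id_{\textbf{A}}$ commutes with all the algebraic operations $\circ^{\textbf{A}}$.

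For the converse $(1)\Rightarrow(2)$, I would start from an expansion $\langle\textbf{A},\mathbf{N}'\rangle$ of $\langle\textbf{A},\mathbf{N}\rangle$ modelling the diagram, and read off the map $g$ by setting $g(m)=(c_m)_{\mathbf{N}'}$. The key points to verify are that $g$ is well-defined and injective, and that it yields a strong (resp.\ elementary) embedding. Injectivity follows from the presence of the crisp identity $\approx$ in the language: for distinct $m,m'\in M$ the sentence $c_m\approx c_{m'}$ has value $\0^{\textbf{A}}$ in $\mathbf{M}^\sharp$, so the corresponding diagram formula forces $g(m)\neq g(m')$ in $\mathbf{N}'$. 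The preservation of atomic values (strong case) or all values (elementary case) then follows by unwinding the defining formulas of the diagram: each atomic (resp.\ arbitrary) sentence about the $c_m$'s is assigned its $\mathbf{M}$-value $a$ via $\sigma\leftrightarrow\overline{a}$, and since $\langle\textbf{A},\mathbf{N}'\rangle$ models this equivalence with the truth constant $\overline{a}$ rigidly denoting $a$, the value of $\sigma$ in $\mathbf{N}'$ must equal $a$ as well.

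\textbf{The main obstacle} I anticipate is the elementary case of the induction in $(2)\Rightarrow(1)$, specifically the quantifier steps. Here it is essential that we work over a \emph{finite} chain $\textbf{A}$: finiteness guarantees the structures are \emph{witnessed}, so that $\semvalue{(\exists x)\varphi}$ and $\semvalue{(\forall x)\varphi}$ are genuine maxima and minima attained at particular domain elements rather than mere suprema and infima. This is what lets the inductive hypothesis on $\varphi$ lift cleanly to the quantified formula, since the witnessing element can be named by some constant and its image under $g$ supplies the corresponding witness on the $\mathbf{N}$ side. Rather than redo this induction from scratch, the cleanest route is to invoke that the argument follows the same lines as \cite[Prop.\ 32]{De11}, adapting its bookkeeping to the two diagram sets, and to highlight only where the crisp identity and the finiteness/witnessing hypotheses enter.
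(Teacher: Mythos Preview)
Your approach is exactly the classical diagram argument, and it matches the paper's own treatment: the paper gives no proof at all for this corollary, merely remarking that it follows the same lines as \cite[Prop.~32]{De11}, which is precisely the route you end up recommending.

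One small correction: the ``main obstacle'' you flag is not actually present. In the elementary case of $(2)\Rightarrow(1)$ there is no quantifier induction to perform, because the definition of elementary embedding (Definition~\ref{def:mapping structures}) already grants $f(\semvalue{\varphi(\overrightarrow{d})}^{\textbf{A}}_{\mathbf{M}})=\semvalue{\varphi(g(\overrightarrow{d}))}^{\textbf{A}}_{\mathbf{N}}$ for \emph{every} formula $\varphi$, quantified or not; so once you translate a $\mathcal{P}^{\mathbf{M}}$-sentence $\sigma$ into a $\mathcal{P}$-formula with the named elements plugged in, the value transfers immediately with $f=Id_{\textbf{A}}$. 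In the strong case only quantifier-free $\sigma$ appear, so again no quantifier step arises. Hence neither finiteness of $\alg{A}$ nor the witnessing property is needed for this corollary; it holds over arbitrary chains, which is consistent with its being imported from \cite{De11} before the finiteness assumption is put to work.
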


\section{Universal Classes}\label{s:universal}

In this section we prove a result on existential amalgamation (Proposition~\ref{am}) from which we extract a \L o\'s--Tarski preservation theorem for universal theories  (Theorem~\ref{thm semantic interpolation}) and a characterization of universal classes of structures (Theorem~\ref{thm characterization universal classes}). Relevant structures in computer science are axiomatized by sets of universal formulas; one prominent example is the class of weighted graphs. Particular versions of the above mentioned results appeared for \L$\forall$ in~\cite{Spa09}. In the context of fuzzy logic programming, Gerla~\cite{Ge05} studied universal formulas with relation to Herbrand interpretations.

%
%
%
%
%


For the upcoming results, we need to recall the notion of substructure.

\begin{Def}\emph{(Substructure)}\label{def cintula}
Let $\langle\textbf{A},\emph{\textbf{M}}\rangle$ and $\langle\textbf{B},\emph{\textbf{N}}\rangle$ be $\mathcal{P}$-structures. We say that $\langle\textbf{A},\emph{\textbf{M}}\rangle$ is a \emph{substructure} of $\langle\textbf{B},\emph{\textbf{N}}\rangle$ if:
\begin{itemize}
\item[(1)] $\textbf{A}$ is a subalgebra of $\textbf{B};$
\item[(2)] $M\subseteq N;$
\item[(3)] for any $n$-ary function symbol $F\in\mathcal{P}$ and elements $d_1,\ldots,d_n\in M$, we have 
\begin{center}
$F_{\emph{\textbf{M}}}(d_1,\ldots,d_n)=F_{\emph{\textbf{N}}}(d_1,\ldots,d_n);$
\end{center}
\item[(4)] for any $n$-ary predicate symbol $P\in\mathcal{P}$ and elements $d_1,\ldots,d_n\in M$,  we have
\begin{center}
$P_{\emph{\textbf{M}}}(d_1,\ldots,d_n)=P_{\emph{\textbf{N}}}(d_1,\ldots,d_n)$.
\end{center}
\end{itemize}
\end{Def} 

Remark that $\langle\emph{\textbf{A}}, \textbf{M}\rangle$ is a substructure of $\langle\emph{\textbf{B}},\textbf{N}\rangle$ if and only if conditions (1)-(3) are satisfied and, instead of (4), the following condition holds: for every quantifier-free formula $\varphi(x_1,\ldots,x_n)$ and any elements $d_1,\ldots,d_n\in M$,
$$ \semvalue{\varphi(d_1,\ldots,d_n)}^{\emph{\textbf{A}}}_{\textbf{M}}=\semvalue{\varphi(d_1,\ldots,d_n)}^{\emph{\textbf{B}}}_{\textbf{N}}.$$ 

With this notion at hand, we can define a corresponding closure property for classes of structures.

\begin{Def}\emph{(Class Closed Under Substructures)}
Let $\K$ be a class of $\mathcal{P}$-structures. We say that $\K$ is \emph{closed under substructures} if, for any structure $\langle\textbf{A},\emph{\textbf{M}}\rangle\in \K$, 
$$ \text{if } \langle\textbf{B},\emph{\textbf{N}}\rangle \text{ is a substructure of } \langle\textbf{A},\emph{\textbf{M}}\rangle\text{, then } \langle\textbf{B},\emph{\textbf{N}}\rangle\in\K.$$
\end{Def} 

Since our characterizations will be based on axiomatizability of classes, we need to recall the definition of elementary class of structures.

\begin{Def} \emph{(Elementary Class \cite[Def. 2.15]{BuSan81})} \label{def class axiomatized}
A class $\K$ of $\mathcal{P}$-structures is an \emph{elementary class} (or a \emph{first-order class}) if there is a set $\Sigma$ of sentences such that for every $  \langle\textbf{A},\emph{\textbf{M}}\rangle$,
$$  \langle\textbf{A},\emph{\textbf{M}}\rangle\in \K \text{ if and only if } \text{    }\tuple{\alg{A}, \struct{M}}\models \Sigma.  $$
In this case, $\K$ is said to be \emph{axiomatized} (or \emph{defined}) \emph{by} $\Sigma$. 
\end{Def} 

Using a predicate language with only one binary relation $R$,
the class of weighted undirected graphs is axiomatized by the following set of universal sentences: 
$$\{(\forall x)(R(x,x)\rightarrow\overline{0}),(\forall x)(\forall y)(R(x,y)\rightarrow R(y,x))\}.$$

Notice that the notion of induced weighted undirected subgraph corresponds to the model-theoretic notion of substructure used in MFL.


\begin{Def}
Let $\mathcal{P}$ be a predicate language. We say that a $\mathcal{P}$-formula $\varphi(x_1,\ldots,x_n)$ is \emph{preserved under substructures} if for any $\mathcal{P}$-structure $\langle\textbf{A},\emph{\textbf{M}}\rangle$ and any substructure $\langle\textbf{B},\emph{\textbf{N}}\rangle$, if $\semvalue{\varphi(d_1,\ldots,d_n)}^{\textbf{A}}_{\emph{\textbf{M}}}=\1^\alg{A}$ for some  $d_1,\ldots,d_n\in N$, then $\semvalue{\varphi(d_1,\ldots,d_n)}^{\textbf{B}}_{\emph{\textbf{N}}}=\1^\alg{B}$.
\end{Def} 

The following lemma can be easily proved by induction on the complexity of universal formulas. 

\begin{Lem} \label{pres} Let $\varphi(x_1,\ldots,x_n)$ be a universal formula. Then, $\varphi(x_1,\ldots,x_n)$ is preserved under substructures. 
\end{Lem}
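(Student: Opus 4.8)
The plan is to prove Lemma~\ref{pres} by induction on the structure of the universal formula $\varphi(x_1,\ldots,x_n)$, exploiting the fact that universal formulas are built from quantifier-free formulas by prefixing universal quantifiers. The base case is exactly the remark following Definition~\ref{def cintula}: if $\varphi$ is quantifier-free, then for any substructure $\langle\emph{\textbf{B}},\textbf{N}\rangle$ of $\langle\emph{\textbf{A}},\textbf{M}\rangle$ and any $d_1,\ldots,d_n\in N$ we have the equality $\semvalue{\varphi(d_1,\ldots,d_n)}^{\emph{\textbf{A}}}_{\textbf{M}}=\semvalue{\varphi(d_1,\ldots,d_n)}^{\emph{\textbf{B}}}_{\textbf{N}}$, so in particular one value equals $\1$ iff the other does, giving preservation. (Note the direction in the Definition: we take truth in the larger structure $\langle\textbf{A},\textbf{M}\rangle$ for parameters from the smaller domain $N$ and conclude truth in the substructure $\langle\textbf{B},\textbf{N}\rangle$.)

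For the inductive step I would treat the universal quantifier. Suppose $\varphi(\overrightarrow{x}) = (\forall y)\,\psi(\overrightarrow{x},y)$, where by the induction hypothesis $\psi$ is preserved under substructures. Assume $\semvalue{(\forall y)\psi(\overrightarrow{d},y)}^{\emph{\textbf{A}}}_{\textbf{M}}=\1^\alg{A}$ for some $\overrightarrow{d}\in N$. By the semantic clause for the universal quantifier, this value is the infimum over $d\in M$ of $\semvalue{\psi(\overrightarrow{d},d)}^{\emph{\textbf{A}}}_{\textbf{M}}$; since this infimum equals $\1^\alg{A}$ and $\1$ is the top element, each $\semvalue{\psi(\overrightarrow{d},d)}^{\emph{\textbf{A}}}_{\textbf{M}}$ must already equal $\1^\alg{A}$ for every $d\in M$. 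In particular this holds for every $d\in N$, since $N\subseteq M$. Applying the induction hypothesis to $\psi$ (with parameters $\overrightarrow{d},d$ all lying in $N$) yields $\semvalue{\psi(\overrightarrow{d},d)}^{\emph{\textbf{B}}}_{\textbf{N}}=\1^\alg{B}$ for every $d\in N$. Taking the infimum over $d\in N$ then gives $\semvalue{(\forall y)\psi(\overrightarrow{d},y)}^{\emph{\textbf{B}}}_{\textbf{N}}=\1^\alg{B}$, which is the desired conclusion.

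The one subtle point worth care is the step where the infimum being $\1$ forces each instance to be $\1$: this uses that $\1^\alg{A}$ is the greatest element of the chain, so nothing can sit strictly above it, hence a family whose infimum is the top must be constantly the top. This is where finiteness (or at least the order structure) is quietly used, but it holds in any $\emph{\textbf{A}}$-chain and needs no witnessing assumption, since we are passing from the infimum being maximal to each term being maximal, not the reverse. If the paper also admits other connectives inside the universal prefix, those are absorbed into the quantifier-free base case, so no separate connective induction is needed beyond the quantifier step; conjunctions of universal formulas, if one wishes to include them explicitly, reduce componentwise since a $\min$ (or $\alg{A}$-conjunction evaluating to $\1$) again forces each conjunct to be $\1$. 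I expect no genuine obstacle here — the lemma is routine — with the only place demanding attention being the correct handling of the quantifier clause via the top-element argument just described.
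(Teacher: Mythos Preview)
Your proof is correct and follows exactly the approach the paper indicates: the paper does not spell out a proof but simply states that the lemma ``can be easily proved by induction on the complexity of universal formulas,'' which is precisely the induction (quantifier-free base case via the remark after Definition~\ref{def cintula}, then peeling off one universal quantifier using that an infimum equal to the top element forces every term to be top) that you carry out. The only quibble is cosmetic: the top-element argument you flag as ``where finiteness is quietly used'' in fact requires no finiteness at all, as you yourself note immediately afterward.
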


In classical model theory amalgamation properties are often related in elegant ways to preservation theorems (see e.g.~\cite{h}). We will try an analogous approach to obtain our desired preservation result. The importance of this idea is that the problem of proving a preservation result reduces then to finding a suitable amalgamation counterpart. This provides us with proofs that have a neat common structure (such as those of the main results in this section and the next one).

We will write $ \tuple{\textbf{A}, {\bf M}_2, \overrightarrow{d}}   \Rrightarrow_{\exists_n}  \tuple{\textbf{A}, {\bf M}_1, \overrightarrow{d}} $ if for any $\exists_n$-formula $\f$, $ \tuple{\textbf{A}, {\bf M}_2} \models \f [\overrightarrow{d}] $ only if $ \tuple{\textbf{A}, {\bf M}_1} \models \f [\overrightarrow{d}]$.

\begin{Pro}\label{am}\emph{(Existential amalgamation)}
 Let $\tuple{\textbf{A}, {\bf M}_1}$ and $\tuple{\textbf{A}, {\bf M}_2}$ be two structures for $\mathcal{P}^{\textbf{A}}$ with a common part $\tuple{\textbf{A}, {\bf M}}$ with domain generated by a sequence of elements $\overrightarrow{d}$. Moreover, suppose that  $$   \tuple{\textbf{A}, {\bf M}_2, \overrightarrow{d}} \Rrightarrow_{\exists_1} \tuple{\textbf{A}, {\bf M}_1, \overrightarrow{d}}. $$ Then there is a structure $\tuple{\textbf{A}, {\bf N}}$ into which $\tuple{\textbf{A}, {\bf M}_2}$ can be strongly embedded by $\tuple{f, g}$ while $\tuple{\textbf{A}, {\bf M}_1}$ is $\mathcal{P}^{\textbf{A}}$-elementarily strongly embedded (taking isomorphic copies, we may assume that $\tuple{\textbf{A}, {\bf M}_1}$ is just a  $\mathcal{P}^{\textbf{A}}$-elementary substructure).  The situation is described by the following picture:
\begin{center}
\begin{tikzpicture}
    \node (E) at (0,0) {};
    \node[above=of E] (F) {$\tuple{\textbf{A}, {\bf N}}$};
        \node[below=of E] (C) {$\tuple{\textbf{A}, {\bf M}}$};

 \node[midway, below, left=of E] (A) {$\tuple{\textbf{A}, {\bf M}_2, \overrightarrow{d}}$};
  \node[midway, below, right=of E] (B) {$\tuple{\textbf{A}, {\bf M}_1, \overrightarrow{d}}$};
    \node[below=of E] (Asubt) {};
  
    \draw[->, dashed] (A)--(F) node [midway, below,  left=of E] {$\tuple{f, g}$ };
\draw[->, dashed] (B)--(F) node [midway, below, right=of E] {  $\preccurlyeq$};
\draw  (B) (A) node [midway,above] {$\Rrightarrow_{\exists_1}$};
\draw[->] (C)--(B) node [midway, below, right=of E] {  $\subseteq$};
\draw[->] (C)--(A) node [midway, below, left=of E] {  $\subseteq$};
\end{tikzpicture}
\end{center}
Moreover, the result is also true when $\tuple{\textbf{A}, {\bf M}_1}$ and $\tuple{\textbf{A}, {\bf M}_2}$ have no common part.
\end{Pro}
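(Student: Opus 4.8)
The plan is to obtain $\langle\textbf{A},\textbf{N}\rangle$ as a model of the combined theory
$$T=\mathrm{ElDiag}(\textbf{A},\textbf{M}_1)\cup\mathrm{Diag}(\textbf{A},\textbf{M}_2),$$
read in the language $\mathcal{L}$ that expands $\mathcal{P}^{\textbf{A}}$ by object constants $c_m$ for every $m\in M_1\cup M_2$, where for $m$ in the common domain $M$ we use a \emph{single shared} constant. This sharing is precisely what will force the two embeddings to agree on the common part. Once a $\mathcal{P}^{\textbf{A}}$-model $\langle\textbf{A},\textbf{N}\rangle$ of $T$ is at hand, Corollary~\ref{lemma cor} does the rest: being a model of $\mathrm{ElDiag}(\textbf{A},\textbf{M}_1)$ yields a map $g_1$ with $\langle Id_{\textbf{A}},g_1\rangle$ an elementary strong embedding of $\langle\textbf{A},\textbf{M}_1\rangle$ into $\langle\textbf{A},\textbf{N}\rangle$, while being a model of $\mathrm{Diag}(\textbf{A},\textbf{M}_2)$ yields $g_2$ with $\langle f,g\rangle:=\langle Id_{\textbf{A}},g_2\rangle$ a strong embedding of $\langle\textbf{A},\textbf{M}_2\rangle$. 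Since the constants $c_m$ with $m\in M$ are shared, both $g_1$ and $g_2$ send each such $m$ to $(c_m)_{\textbf{N}}$, so they coincide on $M$ and we genuinely have an amalgam; passing to an isomorphic copy in which $g_1$ is the inclusion makes $\langle\textbf{A},\textbf{M}_1\rangle$ an actual $\mathcal{P}^{\textbf{A}}$-elementary substructure, as claimed.

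By the compactness property for the fixed finite chain $\textbf{A}$, it suffices to show $T$ is finitely satisfiable over $\textbf{A}$. Fix a finite $T_0\subseteq T$. Its members coming from $\mathrm{Diag}(\textbf{A},\textbf{M}_2)$ mention only finitely many new constants; those naming elements of the common part $M$ can be rewritten as terms in $\overrightarrow{d}$, since $\textbf{M}$ is generated by $\overrightarrow{d}$, while the rest, say $c_{e_1},\dots,c_{e_k}$ with $e_i\in M_2\setminus M$, are genuinely new. Conjoining this finite part and replacing each $c_{e_i}$ by a fresh variable $y_i$ produces a single quantifier-free $\psi(\overrightarrow{d},\overrightarrow{y})$ with $\langle\textbf{A},\textbf{M}_2\rangle\models\psi[\overrightarrow{d},\overrightarrow{e}]$, whence $\langle\textbf{A},\textbf{M}_2\rangle\models(\exists\overrightarrow{y})\,\psi[\overrightarrow{d},\overrightarrow{y}]$.

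Now comes the key step. The formula $(\exists\overrightarrow{y})\,\psi(\overrightarrow{d},\overrightarrow{y})$ is an $\exists_1$-formula in the parameters $\overrightarrow{d}$, so the hypothesis $\langle\textbf{A},\textbf{M}_2,\overrightarrow{d}\rangle\Rrightarrow_{\exists_1}\langle\textbf{A},\textbf{M}_1,\overrightarrow{d}\rangle$ gives $\langle\textbf{A},\textbf{M}_1\rangle\models(\exists\overrightarrow{y})\,\psi[\overrightarrow{d},\overrightarrow{y}]$. Because structures over the finite chain $\textbf{A}$ are witnessed, this supremum equal to $\1^{\textbf{A}}$ is attained, so we may pick witnesses $\overrightarrow{a}$ in $M_1$ with $\langle\textbf{A},\textbf{M}_1\rangle\models\psi[\overrightarrow{d},\overrightarrow{a}]$. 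Expanding the canonical structure $\langle\textbf{A},\textbf{M}_1^{\sharp}\rangle$ by interpreting each $c_{e_i}$ as $a_i$ then satisfies the $\mathrm{Diag}(\textbf{A},\textbf{M}_2)$-part of $T_0$, and it automatically satisfies the $\mathrm{ElDiag}(\textbf{A},\textbf{M}_1)$-part because $\langle\textbf{A},\textbf{M}_1^{\sharp}\rangle$ models its own elementary diagram and the new constants do not occur there. Hence $T_0$ has a model over $\textbf{A}$, and compactness finishes the proof. For the final assertion, when there is no common part one runs the identical argument with $\overrightarrow{d}$ empty, so that $(\exists\overrightarrow{y})\,\psi$ is an $\exists_1$-\emph{sentence} and the hypothesis is applied to sentences.

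I expect the main obstacle to be the bookkeeping around the constants rather than any deep difficulty: one must set up the shared constants for the common part so that $g_1$ and $g_2$ provably coincide on $M$, and one must express an arbitrary finite fragment of $\mathrm{Diag}(\textbf{A},\textbf{M}_2)$ as a single $\exists_1$-formula over the parameters $\overrightarrow{d}$, which is exactly where generation of $\textbf{M}$ by $\overrightarrow{d}$ is used to eliminate the non-new constants. Everything else is driven by compactness for the finite chain, witnessedness (to convert truth value $\1^{\textbf{A}}$ of the existential into actual witnesses), and Corollary~\ref{lemma cor}.
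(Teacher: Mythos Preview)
Your proof is correct and follows essentially the same diagram-method route as the paper: both arguments show that $\mathrm{ElDiag}(\textbf{A},\textbf{M}_1)\cup\mathrm{Diag}(\textbf{A},\textbf{M}_2)$ has a model and then invoke Corollary~\ref{lemma cor}. The only cosmetic difference is that the paper argues by contradiction via compactness for consequence (assuming no model and deriving $\mathrm{ElDiag}(\textbf{A},\textbf{M}_1)\vDash(\exists\overrightarrow{x})(\bigwedge\mathrm{Diag}_0^{*})\rightarrow\overline{a}$ for the predecessor $a$ of $\1^{\textbf{A}}$), whereas you verify finite satisfiability directly by transferring the $\exists_1$-formula to $\textbf{M}_1$ and using witnessedness to produce an explicit model of $T_0$; your version is also more explicit about the shared constants for the common part and why the two embeddings agree on $M$, points the paper leaves implicit.
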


\begin{proof} It is not a difficult to show that $  \mbox{ElDiag}(\emph{\textbf{A}}, {\bf M}_1) \cup \mbox{Diag}(\emph{\textbf{A}}, {\bf M}_2)$ (where we let the elements of the domain serve as constants to name themselves) has a model, which suffices for the purposes of the result.  Suppose otherwise, that is, for some finite $\mbox{Diag}_0(\emph{\textbf{A}}, {\bf M}_2) \subseteq \mbox{Diag}(\emph{\textbf{A}}, {\bf M}_2)$, we have that $$ \mbox{ElDiag}(\emph{\textbf{A}}, {\bf M}_1) \vDash (\bigwedge  \mbox{Diag}_0(\emph{\textbf{A}}, {\bf M}_2))  \rightarrow  \overline{a}$$ for the immediate predeccessor  $a $ in the lattice order of \emph{\textbf{A}}  of $\1^\emph{\textbf{A}}$.

Quantifying away the new individual constants, we obtain a set of formulas $ \mbox{Diag}_0^{*}(\emph{\textbf{A}}, {\bf M}_2)$ such that:
 $$ \mbox{ElDiag}(\emph{\textbf{A}}, {\bf M}_1) \vDash (\exists \overrightarrow{x}) ((\bigwedge  \mbox{Diag}_0^{*}(\emph{\textbf{A}}, {\bf M}_2)) ) \rightarrow  \overline{a}.$$

Since $\tuple{\emph{\textbf{A}}, {\bf M}_2, \overrightarrow{d}}   \Rrightarrow_{\exists_1}  \tuple{\emph{\textbf{A}}, {\bf M}_1, \overrightarrow{d}}$,
then  $$ \tuple{\emph{\textbf{A}}, {\bf M}_2} \not \models (\exists \overrightarrow{x}) (\bigwedge  \mbox{Diag}_0^{*}(\emph{\textbf{A}}, {\bf M}_2)),$$ which is a contradiction. 
Note, moreover, that if  $$ \tuple{\emph{\textbf{A}}, {\bf M}_2, \overrightarrow{d}}   \Rrightarrow_{\exists_1}  \tuple{\emph{\textbf{A}}, {\bf M}_1, \overrightarrow{d}}, $$ we also have that whenever $\f(\bar{x}) $ is quantifier-free formula of $\pl^{\alg{A}}$, $ \tuple{\emph{\textbf{A}}, {\bf M}_2 } \models \f[\overrightarrow{d}]$ iff $ \tuple{\emph{\textbf{A}}, {\bf M}_1 } \models \f[\overrightarrow{d}] $. Left-to-right is clear; the contrapositive of the right-to-left direction follows easily: if $\tuple{\emph{\textbf{A}}, {\bf M}_1 } \not \models \f[\overrightarrow{d}]$, then $\tuple{\emph{\textbf{A}}, {\bf M}_1 } \not \models \f \leftrightarrow \overline{a} [\overrightarrow{d}]$ for some $a \neq \1^{\alg{A}}$, so $ \tuple{\emph{\textbf{A}}, {\bf M}_2 } \models \f \leftrightarrow  \overline{a}[\overrightarrow{d}]$, which means that $ \tuple{\emph{\textbf{A}}, {\bf M}_2 } \not \models \f [\overrightarrow{d}]$.

Observe that the proof can be similarly carried out, \emph{mutatis mutandi}, when $\tuple{\emph{\textbf{A}}, {\bf M}_1}$ and $\tuple{\emph{\textbf{A}}, {\bf M}_2}$ have no common part as well.\qed\end{proof}

Now we have the elements to establish an exact analogue of Theorem 5 from~\cite{l}, \L o\'s--Tarski preservation theorem. 

\begin{Thm} \emph{(\L o\'s--Tarski preservation theorem)} \label{thm semantic interpolation} Let $T$ be a $\pl^{\alg{A}}$-theory and $\Phi(\overrightarrow{x})$ a set of formulas in $\pl^{\alg{A}}$. Then the following are equivalent:
\begin{itemize}
\item [(i)] For any models of T, $\tuple{\alg{A}, \struct{M}} \subseteq \tuple{\alg{A}, \struct{N}}$, we have:\\ if $\tuple{\alg{A}, \struct{N}} \models \Phi$, then $\tuple{\alg{A}, \struct{M}} \models \Phi$.
\item [(ii)] There is a set of universal $\pl^{\alg{A}}$-formulas $\Theta(\overrightarrow{x})$ such that: $T, \Phi \vDash  \Theta$ and  $T,  \Theta  \vDash \Phi$.
\end{itemize}
\end{Thm}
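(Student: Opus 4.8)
The plan is to follow the classical route for \cite[Theorem 6.5.4]{h}, replacing the compactness and amalgamation inputs by their graded analogues available over the fixed finite chain $\alg{A}$. The direction (ii) $\Rightarrow$ (i) is the routine one: given $\Theta(\overrightarrow{x})$ as in (ii) and models $\tuple{\alg{A},\struct{M}} \subseteq \tuple{\alg{A},\struct{N}}$ of $T$ with $\tuple{\alg{A},\struct{N}} \models \Phi[\overrightarrow{d}]$ for some $\overrightarrow{d}$ in $M$, one uses $T, \Phi \vDash \Theta$ to get $\tuple{\alg{A},\struct{N}} \models \Theta[\overrightarrow{d}]$, then Lemma~\ref{pres} (since $\Theta$ is universal) to bring this down to $\tuple{\alg{A},\struct{M}} \models \Theta[\overrightarrow{d}]$, and finally $T, \Theta \vDash \Phi$ to conclude $\tuple{\alg{A},\struct{M}} \models \Phi[\overrightarrow{d}]$. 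For (i) $\Rightarrow$ (ii) I would take $\Theta(\overrightarrow{x})$ to be the set of all universal $\pl^{\alg{A}}$-formulas $\theta(\overrightarrow{x})$ with $T, \Phi \vDash \theta$; then $T, \Phi \vDash \Theta$ holds by construction, so everything reduces to proving $T, \Theta \vDash \Phi$.

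So fix a model $\tuple{\alg{A},\struct{N}}$ of $T$ and a tuple $\overrightarrow{d}$ with $\tuple{\alg{A},\struct{N}} \models \Theta[\overrightarrow{d}]$; the goal is $\tuple{\alg{A},\struct{N}} \models \Phi[\overrightarrow{d}]$. The crucial step is to manufacture a model of $T \cup \Phi$ that agrees existentially with $\struct{N}$ on $\overrightarrow{d}$. Introducing new constants $\overrightarrow{c}$ to name $\overrightarrow{d}$, let $E$ be the set of all $\exists_1$-sentences $\varphi(\overrightarrow{c})$ of $\pl^{\alg{A}}$ with $\semvalue{\varphi[\overrightarrow{d}]}^{\alg{A}}_{\struct{N}} = \1^{\alg{A}}$. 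I claim $T \cup \Phi(\overrightarrow{c}) \cup E$ has a model. If not, then by the compactness property of the fixed finite chain $\alg{A}$ some finite $E_0 = \{\varphi_1,\dots,\varphi_m\} \subseteq E$ already makes $T \cup \Phi(\overrightarrow{c}) \cup E_0$ unsatisfiable; writing $\varphi$ for $\varphi_1 \wedge \dots \wedge \varphi_m$, which is $1$-equivalent to an $\exists_1$-formula $(\exists \overrightarrow{y})\beta(\overrightarrow{y},\overrightarrow{c})$ with $\beta$ quantifier-free, unsatisfiability means $\semvalue{\varphi(\overrightarrow{c})}^{\alg{A}}_{\struct{M}_1} \neq \1^{\alg{A}}$ in every model $\tuple{\alg{A},\struct{M}_1}$ of $T \cup \Phi(\overrightarrow{c})$. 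Since $\alg{A}$ is finite, any value below $\1^{\alg{A}}$ lies below its immediate predecessor $a$, so in fact $T, \Phi \vDash \varphi \rightarrow \overline{a}$. Using the residuation law that over a chain $(\exists \overrightarrow{y})\beta \rightarrow \overline{a}$ and $(\forall \overrightarrow{y})(\beta \rightarrow \overline{a})$ are $1$-equivalent (because $(\bigvee_i b_i) \rightarrow a = \bigwedge_i (b_i \rightarrow a)$ in a residuated chain, with the suprema realized as maxima by witnessedness), the universal formula $\theta(\overrightarrow{x}) := (\forall \overrightarrow{y})(\beta(\overrightarrow{y},\overrightarrow{x}) \rightarrow \overline{a})$ satisfies $T, \Phi \vDash \theta$, whence $\theta \in \Theta$. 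But then $\tuple{\alg{A},\struct{N}} \models \theta[\overrightarrow{d}]$ forces $\semvalue{\varphi[\overrightarrow{d}]}^{\alg{A}}_{\struct{N}} \leq a < \1^{\alg{A}}$, contradicting $\varphi \in E$. This establishes the claim.

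Now let $\tuple{\alg{A},\struct{M}_1}$ be a model of $T \cup \Phi(\overrightarrow{c}) \cup E$, so that $\tuple{\alg{A},\struct{M}_1} \models T \cup \Phi[\overrightarrow{d}]$ and, by the very definition of $E$, $\tuple{\alg{A},\struct{N},\overrightarrow{d}} \Rrightarrow_{\exists_1} \tuple{\alg{A},\struct{M}_1,\overrightarrow{d}}$. As observed inside the proof of Proposition~\ref{am}, this relation forces $\struct{N}$ and $\struct{M}_1$ to agree on all quantifier-free formulas at $\overrightarrow{d}$, so the substructures generated by $\overrightarrow{d}$ are identified and serve as a common part. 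Applying existential amalgamation (Proposition~\ref{am}) with $\struct{M}_2 := \struct{N}$, I obtain a structure $\tuple{\alg{A},\struct{P}}$ into which $\tuple{\alg{A},\struct{N}}$ is strongly embedded (so that, up to isomorphic copies, $\tuple{\alg{A},\struct{N}} \subseteq \tuple{\alg{A},\struct{P}}$) while $\tuple{\alg{A},\struct{M}_1}$ is $\pl^{\alg{A}}$-elementarily embedded. The elementary embedding transfers $T$ and $\Phi[\overrightarrow{d}]$ up to $\tuple{\alg{A},\struct{P}}$, giving $\tuple{\alg{A},\struct{P}} \models T$ and $\tuple{\alg{A},\struct{P}} \models \Phi[\overrightarrow{d}]$, where $\overrightarrow{d}$ now lies in the common part inside $N \subseteq P$. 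Since $\tuple{\alg{A},\struct{N}} \subseteq \tuple{\alg{A},\struct{P}}$ are both models of $T$, hypothesis (i) yields $\tuple{\alg{A},\struct{N}} \models \Phi[\overrightarrow{d}]$, as desired.

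The main obstacle is the consistency claim in the second paragraph. The classical argument passes through a negation $\neg \varphi$ to produce the universal consequence, which is not directly available in the fuzzy setting; the finiteness of $\alg{A}$ is exactly what rescues it, since it lets one upgrade ``value $\neq \1^{\alg{A}}$'' to the uniform bound ``value $\leq a$'' for the co-atom $a$, and the residuation identity is what converts the resulting bounded existential statement into a genuine universal formula eligible for $\Theta$. Everything else is bookkeeping with diagrams and the two embeddings supplied by Proposition~\ref{am}.
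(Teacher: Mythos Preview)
Your proof is correct and follows essentially the same route as the paper: define $\Theta$ as the set of universal consequences of $T\cup\Phi$, use compactness together with the co-atom $a$ of $\alg{A}$ and the residuation identity $((\exists\overrightarrow{y})\beta\rightarrow\overline{a})\leftrightarrow(\forall\overrightarrow{y})(\beta\rightarrow\overline{a})$ to show that $T\cup\Phi\cup E$ is satisfiable, and then invoke Proposition~\ref{am} to embed the given model into an extension where $\Phi$ holds. Your write-up is in fact more explicit than the paper's, which stops after the consistency claim with ``and then quote the existential amalgamation theorem''; you spell out how the elementary leg of the amalgam transfers $T$ and $\Phi[\overrightarrow{d}]$ upward and how hypothesis (i) then pulls $\Phi[\overrightarrow{d}]$ back down along the substructure leg, and you also keep track of the free variables $\overrightarrow{x}$ via the fresh constants $\overrightarrow{c}$ throughout.
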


\begin{proof} Let us prove the difficult direction (the converse direction is clear by Lemma~\ref{pres}).  Consider $(T \cup \Phi(\overrightarrow{x}))_{\forall_1}$, the collection of all $\forall_1$ logical consequences of $T \cup \Phi(\overrightarrow{x})$. We need to establish that the only models of  $(T \cup \Phi(\overrightarrow{x}))_{\forall_1}$ among the models of $T$ are the substructures of models of $\Phi(\overrightarrow{x})$. Let $\tuple{\alg{A}, \struct{M}}$ be a model of  $(T \cup \Phi(\overrightarrow{x}))_{\forall_1}$. All we need to do is find a model $\tuple{\alg{A}, \struct{N}}$ of the theory $T \cup \Phi(\overrightarrow{x})$ such that  $\tuple{\alg{A}, \struct{M}} \Rrightarrow_{\exists_1} \tuple{\alg{A}, \struct{N}}$ and then quote the existential amalgamation theorem.

Let $U$ be all $\exists_1$-formulas that hold in in $\tuple{\alg{A}, \struct{M}}$. We claim  then   that $T \cup \Phi(\overrightarrow{x}) \cup U$ has a model. Otherwise, by compactness, for $$\{(\exists \overrightarrow{x_0}) \phi_0(\overrightarrow{x_0}), \dots,  (\exists \overrightarrow{x_n}) \phi_0(\overrightarrow{x_n})\} \subseteq U$$ we have that in all models of $T \cup \Phi(\overrightarrow{x})$, it holds that $$ (\exists \overrightarrow{x}_0) \phi_0(\overrightarrow{x_0}) \wedge \dots \wedge (\exists \overrightarrow{x_n}) \phi_0(\overrightarrow{x_n}) \rightarrow \overline{a},$$ where $a$ is the immediate predecessor of $\1^\alg{A}$, and by basic manipulations, $$ (\exists \overrightarrow{x_0}, \dots, \overrightarrow{x_n}) (\phi_0(\overrightarrow{x_0}) \wedge \dots \wedge \phi_0(\overrightarrow{x_n})) \rightarrow \overline{a},$$ which is just equivalent to  $$ (\forall \overrightarrow{x_0}, \dots, \overrightarrow{x_n}) (\phi_0(\overrightarrow{x_0}) \wedge \dots \wedge \phi_0(\overrightarrow{x_n}) \rightarrow \overline{a}).$$ 
The latter formula must be in $(T \cup \Phi(\overrightarrow{x}))_{\forall_1}$ then, which is a contradiction.\qed
\end{proof}


Following a similar proof, we can obtain an algebraic characterization equivalent to  Theorem~\ref{thm semantic interpolation}.
\begin{Thm} \label{thm characterization universal classes} Let $\K$ be a class of $\mathcal{P^{\textbf{A}}}$-structures. Then, the following are equivalent: 

\begin{itemize}
\item[(i)] $\K$ is closed under  isomorphisms, substructures, and ultraproducts.

\item[(ii)]  $\K$ is axiomatized by a set of universal $\mathcal{P^{\textbf{A}}}$-sentences.
\end{itemize}
\end{Thm}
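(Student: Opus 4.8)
The plan is to run the classical \L o\'s--Tarski recipe, taking as candidate axiomatization $\Sigma=\mathrm{Th}_{\forall}(\K)$, the set of all universal $\mathcal{P}^{\mathbf{A}}$-sentences satisfied by every member of $\K$, and to prove $\K=\mathrm{Mod}(\Sigma)$. The direction (ii)$\Rightarrow$(i) is the routine one: closure under isomorphisms is immediate since the truth value of a sentence is isomorphism-invariant; closure under substructures is Lemma~\ref{pres}; and closure under ultraproducts follows from \L o\'s's theorem, which in this finitely-valued setting says that the value of a formula in an ultraproduct $\prod_i \tuple{\mathbf{A},\mathbf{M}_i}/U$ is the value (unique, since $\mathbf{A}$ is finite) taken on a set of indices lying in $U$; in particular a universal sentence with value $\1^{\mathbf{A}}$ in every factor keeps that value in the ultraproduct.

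For the hard direction (i)$\Rightarrow$(ii) the inclusion $\K\subseteq\mathrm{Mod}(\Sigma)$ is trivial, so the content is $\mathrm{Mod}(\Sigma)\subseteq\K$. I would obtain this through the characterization $\mathrm{Mod}(\mathrm{Th}_{\forall}(\K))=\mathbf{S}(\mathbf{P}_{\mathrm u}(\K))$, the class of substructures of ultraproducts of members of $\K$. Granting it, the closure hypotheses finish the proof at once: $\mathbf{P}_{\mathrm u}(\K)=\K$ by ultraproduct-closure, whence $\mathbf{S}(\mathbf{P}_{\mathrm u}(\K))=\mathbf{S}(\K)=\K$ by substructure- and isomorphism-closure, so $\mathrm{Mod}(\Sigma)=\K$ and $\Sigma$ is the desired universal axiomatization. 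The inclusion $\mathbf{S}(\mathbf{P}_{\mathrm u}(\K))\subseteq\mathrm{Mod}(\mathrm{Th}_{\forall}(\K))$ is handled exactly like the easy direction (\L o\'s plus Lemma~\ref{pres}).

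The core is the reverse inclusion: given $\tuple{\mathbf{A},\mathbf{M}}\models\mathrm{Th}_{\forall}(\K)$, I would embed it into an ultraproduct of members of $\K$ by a diagram-and-ultraproduct construction. Index the finite subsets $\delta$ of $\mathrm{Diag}(\mathbf{A},\mathbf{M})$; writing such a piece as $\delta(c_{m_1},\dots,c_{m_k})$ and replacing the naming constants by variables to get $\delta(\overrightarrow{x})$, one has $\semvalue{\delta[\overrightarrow{m}]}^{\mathbf{A}}_{\mathbf{M}}=\1^{\mathbf{A}}$, so $\tuple{\mathbf{A},\mathbf{M}}\models(\exists \overrightarrow{x})\delta$. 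I claim some member of $\K$ satisfies $(\exists \overrightarrow{x})\delta$ with value $\1^{\mathbf{A}}$: otherwise, $\mathbf{A}$ being finite, $(\exists \overrightarrow{x})\delta$ would take value at most the immediate predecessor $a$ of $\1^{\mathbf{A}}$ in every member of $\K$, so by residuation $\tuple{\mathbf{A},\mathbf{N}}\models (\exists \overrightarrow{x})\delta\rightarrow\overline{a}$ for every $\tuple{\mathbf{A},\mathbf{N}}\in\K$, and this is $1$-equivalent to the universal sentence $(\forall \overrightarrow{x})(\delta\rightarrow\overline{a})$ --- exactly the quantifier manipulation used in Proposition~\ref{am} and Theorem~\ref{thm semantic interpolation}. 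That universal sentence would then lie in $\mathrm{Th}_{\forall}(\K)$ and hence hold in $\tuple{\mathbf{A},\mathbf{M}}$, contradicting $\semvalue{\delta[\overrightarrow{m}]}^{\mathbf{A}}_{\mathbf{M}}=\1^{\mathbf{A}}$. Using witnessedness of the finite chain I extract genuine witnesses in that member and expand it to a $\mathcal{P}^{\langle\mathbf{A},\mathbf{M}\rangle}$-structure $\mathbf{N}_{\delta}\in\K$ modelling $\delta$.

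Finally I would take an ultrafilter $U$ on the index set containing every up-set $\{\delta':\delta'\supseteq\delta\}$ and form $\prod_{\delta}\mathbf{N}_{\delta}/U$. Its algebra reduct is $\mathbf{A}^{I}/U\cong\mathbf{A}$ because $\mathbf{A}$ is finite, and by \L o\'s's theorem it satisfies every sentence of $\mathrm{Diag}(\mathbf{A},\mathbf{M})$, since for each such $\theta$ the set $\{\delta:\theta\in\delta\}$ lies in $U$ and forces $\mathbf{N}_{\delta}\models\theta$. By Corollary~\ref{lemma cor} this yields a strong embedding of $\tuple{\mathbf{A},\mathbf{M}}$ into the $\mathcal{P}^{\mathbf{A}}$-reduct of the ultraproduct, a member of $\mathbf{P}_{\mathrm u}(\K)$; hence $\tuple{\mathbf{A},\mathbf{M}}\in\mathbf{S}(\mathbf{P}_{\mathrm u}(\K))$. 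I expect the main obstacle to be the ultraproduct machinery itself --- setting up ultraproducts of $\mathbf{A}$-structures, verifying \L o\'s's theorem in the finitely-valued setting, and checking that interpreting the unmentioned naming constants arbitrarily does no harm --- since, unlike the earlier propositions, plain compactness does not suffice here: the limit structure must be manufactured as an ultraproduct of genuine members of $\K$ so that ultraproduct-closure can be invoked.
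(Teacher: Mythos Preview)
Your proposal is correct and is, in fact, considerably more detailed than what the paper offers: the paper gives no proof of this theorem at all, only the remark ``Following a similar proof, we can obtain an algebraic characterization equivalent to Theorem~\ref{thm semantic interpolation}.'' Your argument is the natural way to make that hint precise.

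The one point worth noting is that your proof is not literally ``the same proof'' as that of Theorem~\ref{thm semantic interpolation}, and you correctly identify why. In Theorem~\ref{thm semantic interpolation} the target is the class of models of a \emph{theory} $T\cup\Phi$, so compactness (and then Proposition~\ref{am}) suffices to produce the extension; here $\K$ is an arbitrary class with no assumed axiomatization, so the extending structure must be manufactured as an ultraproduct of members of $\K$ in order to invoke ultraproduct-closure. Your diagram-and-ultraproduct construction, together with the immediate-predecessor trick $(\exists\overrightarrow{x})\delta\rightarrow\overline{a}\;\Leftrightarrow\;(\forall\overrightarrow{x})(\delta\rightarrow\overline{a})$ lifted straight from the paper's arguments, is exactly the right adaptation. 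The ingredients you flag as needing care --- \L o\'s's theorem for $\alg{A}$-structures and the identification $\alg{A}^I/U\cong\alg{A}$ for finite $\alg{A}$ --- are supplied by the reference~\cite{De14} that the paper already cites for compactness via ultraproducts, and the embedding step is precisely Corollary~\ref{lemma cor}.
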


The following corollary can be obtained because in our setting two forms of compactness (that are generally distinct, in, say, \L ukasiewicz logic) collapse, namely (1) the compactness of the consequence relation and (2) the compactness of the satisfiability relation. (1) clearly implies (2) in the presence of \0 in our language. To see the converse, say that $T \vDash \f$, which amounts to say that $T \cup \{\f \rightarrow \overline{a}\}$ (where $a$ is the predecessor of $\1^\alg{A}$) does not have a model. Hence, by (2), there is a finite $T_0 \subseteq T$ such that $T_0 \cup \{\f \rightarrow \overline{a}\}$ has no model, so, in fact, $T_0 \vDash \f$. 

\begin{Cor} \label{tarski} Let $T \cup \{\f\}$ be a set of $\pl^{\alg{A}}$-sentences. Then, $\f$ is preserved under substructures of models of $T$ if, and only if, $\f$ is 1-equivalent to a universal $\pl^{\alg{A}}$-sentence modulo $T$.
\end{Cor}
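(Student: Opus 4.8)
The plan is to obtain this as the single-sentence instance of the \L o\'s--Tarski preservation theorem (Theorem~\ref{thm semantic interpolation}), using compactness to pass from a \emph{set} of universal formulas to a single universal sentence.

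For the direction from right to left, suppose $\f$ is $1$-equivalent modulo $T$ to a universal sentence $\psi$, i.e.\ $T, \f \vDash \psi$ and $T, \psi \vDash \f$. Given models $\tuple{\alg{A}, \struct{M}} \subseteq \tuple{\alg{A}, \struct{N}}$ of $T$ with $\tuple{\alg{A}, \struct{N}} \models \f$, I would argue in three steps: $\tuple{\alg{A}, \struct{N}} \models \psi$ since $T, \f \vDash \psi$; then $\tuple{\alg{A}, \struct{M}} \models \psi$ by Lemma~\ref{pres}, as $\psi$ is universal and hence preserved under substructures; and finally $\tuple{\alg{A}, \struct{M}} \models \f$ because $\struct{M}$ is a model of $T$ and $T, \psi \vDash \f$.

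For the main direction, apply Theorem~\ref{thm semantic interpolation} with $\Phi(\overrightarrow{x}) := \{\f\}$ (the tuple $\overrightarrow{x}$ being empty). The assumption that $\f$ is preserved under substructures of models of $T$ is precisely clause (i), so clause (ii) supplies a set $\Theta$ of universal sentences with $T, \f \vDash \Theta$ and $T, \Theta \vDash \f$. It remains to compress $\Theta$ into a single sentence. From $T \cup \Theta \vDash \f$ and the compactness of the consequence relation over the fixed finite chain $\alg{A}$ (the collapse of the two compactness notions is exactly what is recorded in the remark preceding the statement), there is a finite $\Theta_0 \subseteq \Theta$ with $T \cup \Theta_0 \vDash \f$. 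Put $\psi := \bigwedge \Theta_0$. Renaming bound variables apart and pulling the universal quantifiers to the front shows that $\psi$ is $1$-equivalent to a universal sentence (on a chain both sides evaluate to the same infimum of minima). Then $T, \psi \vDash \f$ because $\psi$ entails each member of $\Theta_0$, and $T, \f \vDash \psi$ because $T, \f \vDash \theta$ for every $\theta \in \Theta_0 \subseteq \Theta$; hence $\f$ is $1$-equivalent to $\psi$ modulo $T$.

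The step needing the most care is this final compression: one must invoke compactness of the \emph{consequence} relation rather than merely of satisfiability, and check that a finite conjunction of universal sentences is again, up to $1$-equivalence, universal. Both points are covered by the preceding remark and by the evaluation identity for $\wedge$ under $\forall$, so the argument uses nothing beyond the tools already assembled.
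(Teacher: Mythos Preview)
Your proposal is correct and follows essentially the same route as the paper: apply Theorem~\ref{thm semantic interpolation} with $\Phi=\{\f\}$ to obtain a set of universal sentences, then use compactness of the consequence relation over the finite chain $\alg{A}$ to reduce to a single universal sentence. The paper's proof is terser (it omits the easy direction and the explicit prenexing of the finite conjunction), but the substance is the same.
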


\begin{proof} Apply Theorem~\ref{thm semantic interpolation} for $\Phi = \{\f\}$. Consequently, $\f$ is axiomatized by a set of universal $\pl^{\alg{A}}$-sentences. Then bring it down to a single such formula using \emph{\textbf{A}}-compactness for consequence.\qed\end{proof}

A natural question is whether Corollary~\ref{tarski} can be strengthened to strong equivalence in terms of $\leftrightarrow$, that is, whether one can find a universal formula that agrees with $\phi$ on each value in every structure (not just on value $\1^{\alg{A}}$). Following the lines of the above proof this would require to show something like, for an arbitrary model $\tuple{\alg{A}, \struct{M}}$,
$$
\semvalue{\psi}^{\emph{\textbf{A}}}_{\textbf{M}} \leq^{\boldsymbol{A}} \semvalue{\phi}^{\emph{\textbf{A}}}_{\textbf{M}} \,\, (\mbox{for all $\psi$ s.t.} \vDash \phi \rightarrow \psi) .
$$
Then, one would expect to reduce the left side of the inequality to a finite set $\Psi$ such that
$$
inf_{\leq^{\boldsymbol{A}}} \{\semvalue{\psi}^{\emph{\textbf{A}}}_{\textbf{M}} \mid  \psi \in \Psi\} \leq^{\boldsymbol{A}} \semvalue{\phi}^{\emph{\textbf{A}}}_{\textbf{M}}.
$$
However, this reduction would come from compactness in the usual argument, but it does not in this one. This is because compactness is about consequence as opposed to implication, which are different in a setting without a deduction theorem such as this. In fact, in~\cite{Spa09} similar results in the framework \L ukasiewicz logic are obtained only for $1$-equivalence as well.

Needless to say, the previous results, in particular, allow to conclude that a class of $\mathcal{P}$-structures (that is, structures for a language without additional truth-constants) closed under substructures can be axiomatized by universal $\mathcal{P^{\textbf{A}}}$-sentences. One might wonder, of course, if it is really necessary to resort a universal axiomatization in the expanded language.

Let us present a counterexample showing that, in general, the base language $\mathcal{P}$ does not suffice. Let $\mathcal{P}$ be the language with only one monadic predicate $P$ and take two structures over the standard G\"odel chain, $\langle [0,1]_\G, \struct{M}\rangle$ and $\langle [0,1]_\G, \struct{N} \rangle$. The domain in both cases is the set of all natural numbers $\mathbb{N}$ and the interpretation of the predicate is respectively defined as: $ P_{\struct{M}} (n)=\frac{3}{4} $, and $ P_{\struct{N}} (n)=\frac{1}{2}$, for every $n \in \mathbb{N}$. First we show that $\langle [0,1]_\G, \struct{M}\rangle \equiv \langle [0,1]_\G, \struct{N} \rangle$. Take $f$ as any non-decreasing bijection from $[0,1]$ to $[0,1]$ such that $f(\frac{3}{4})=\frac{1}{2}$, $f(1) = 1$, $f(0) = 0$. It is easy to check that $f$ is a $\G$-homomorphism preserving suprema and infima. Then, we can consider the $\sigma$-mapping $\tuple{f,\mathit{Id}}$ and apply~\cite[Lemma 11]{DeGaNo18} to obtain that $\langle [0,1]_\G, \struct{M}\rangle \equiv \langle [0,1]_\G, \struct{N} \rangle$. Consider now the finite subalgebra $\emph{\textbf{A}}$ of $[0,1]_\G$ generated by the subset $\{0,\frac{1}{2},\frac{3}{4}, 1\}$. Clearly, the structures $\langle [0,1]_\G, \struct{M}\rangle$ and $\langle [0,1]_\G, \struct{N} \rangle$ can be regarded as structures over $\emph{\textbf{A}}$. Thus, we have $$\langle\emph{\textbf{A}}, \struct{M}\rangle \equiv \langle \emph{\textbf{A}}, \struct{N} \rangle.$$
Observe that $\semvalue{(\forall x) P(x)}^{\emph{\textbf{A}}}_{\struct{M}} =\frac{3}{4}$ and $\semvalue{(\forall x) P(x)}^{\emph{\textbf{A}}}_{\struct{N}} = \frac{1}{2}$. Consider the expanded language $\mathcal{P}_{\emph{\textbf{A}}}$ obtained by adding a constant symbol $\overline{a}$ for every element $a \in A$. Let $ \K$ be the class of $\pl$-structures valued on $\alg{A}$, whose natural expansion to $\mathcal{P}_{\emph{\textbf{A}}}$ (that is, the expansion in which every constant $\overline{a}$ is interpreted as the corresponding element $a$) satisfies the sentence $$\overline{\frac{3}{4}} \to (\forall x) P(x).$$ Clearly, $\K$ is closed under substructures and $\langle\emph{\textbf{A}}, \struct{M}\rangle\in \K$. However $\langle\emph{\textbf{A}}, \struct{N}\rangle\notin \K$, because $\semvalue{(\forall x) P(x)}^{\emph{\textbf{A}}}_{\struct{N}} = \frac{1}{2}$. Therefore, $ \K$ cannot be axiomatized by a set of universal $\mathcal{P}$-sentences, since it contains $\langle\emph{\textbf{A}}, \struct{M}\rangle$ but not the elementary equivalent $\langle \emph{\textbf{A}}, \struct{N} \rangle$. Hence, we have produced an example of a class of $\pl$-structures closed under substructures (and, obviously, under isomorphisms and ultraproducts) which is not axiomatizable with universal $\pl$-sentences.

\section{Universal-existential classes}\label{s:universal-existential}

This section runs quite parallel to the previous one. We recall the notion of elementary chain of structures and its corresponding Tarski--Vaught theorem and prove that universal-existential formulas are preserved under unions of chains (Lemma~\ref{l:preun1}). After that, we prove a result on existential-universal amalgamation (Proposition~\ref{am2}) and derive from it a Chang--\L o\'s--Suszko preservation theorem (Theorem~\ref{Chang-Los-Suszko}).

Consider the class $\K$ of all structures in a signature with a binary function symbol $\cdot$, a unary function symbols $^{-1}$, an individual constant $1$, and a unary predicate $G$ satisfying the following axioms: 
\begin{enumerate}

\item[] $(\forall x) (\exists y) (y^n \approx x)$ for each  $n \geqslant 2$.
\item[] $(\forall x, y) ((x\cdot y)\cdot z \approx x\cdot(y \cdot z))$
\item[] $(\forall x) (x \cdot 1 \approx x)$
\item[] $(\forall x) (x\cdot x^{-1} \approx 1)$
\item[] $(\forall x, y)(x\cdot y \approx y\cdot x)$
\item[] $(\forall x,y) ((Gx \wedge Gy) \rightarrow G(xy))$
\item[] $(\forall x) (Gx \rightarrow G(x^{-1}))$
\end{enumerate}

This is the class of divisible Abelian groups with a fuzzy subgroup defined by the predicate $G$ (following the definition of~\cite{Ros71}). By our Chang--\L o\'s--Suszko preservation theorem below,  $\K$ is a class closed under unions of chains.  

Another example of such class be provided by the class of all weighted graphs where the formula $$(\forall x) (\exists y, z) (y \not\approx z \wedge Rxy \wedge Rxz)$$ holds, that is, every  vertex has  at least two incident edges. This axiomatizes the class of graphs where every vertex has degree $\geqslant 2$.

Given an ordinal $\gamma$, a sequence $\{\tuple{\alg{ A}, {\bf M}_i} \mid i < \gamma\}$ of models is called a \emph{chain} when for all $i<j<\gamma$ we have that $\tuple{\alg{ A} , {\bf M}_i}$ is a substructure of  $\tuple{\alg{ A}, {\bf M}_j}$. If, moreover, these substructures are elementary, we speak of an \emph{elementary chain}. The \emph{union} of the chain $\{\tuple{\alg{ A}, {\bf M}_i} \mid i < \gamma\}$ is the structure $\tuple{\alg{ A}, {\bf M}}$ where  $\struct{ M} $ is defined by taking as its domain $\bigcup _{i<\gamma}{ M}_i$, interpreting the constants of the language as they were interpreted in each ${\bf M}_i$ and similarly with the relational symbols of the language. Observe as well that ${\bf M}$ is well defined given that $\{\tuple{\alg{ A}, {\bf M}_i} \mid i < \gamma\}$ is a chain.

Next we recall a useful theorem that has been established and used to construct saturated models in the context of mathematical fuzzy logic in~\cite{BN18}.

\begin{Thm}[\hspace{-0.04cm}\cite{BN18}]\emph{(Tarski--Vaught)}\label{t:Unions} Let $\tuple{\alg{A}, \struct{M}}$ be the union of the elementary chain $\{\tuple{\alg{A},\struct{M}_i} \mid i < \gamma\}$. Then, for every sequence $\overrightarrow{d}$ of elements of\/ ${\bf M}_i$ and formula $\f$, $ \semvalue{\f (\overrightarrow{d})}^\alg{A}_{\struct{M}} = \semvalue{\f (\overrightarrow{d})}^\model{\alg{A}}_{\struct{M}_i}$. Moreover, if the chain is not elementary, we still have that $ \semvalue{\f (\overrightarrow{d})}^\alg{A}_{\struct{M}} = \semvalue{\f (\overrightarrow{d})}^\model{\alg{A}}_{\struct{M}_i}$ for every quantifier free formula.
\end{Thm}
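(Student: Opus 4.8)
The plan is to prove the equality $\semvalue{\f(\overrightarrow{d})}^\alg{A}_{\struct{M}} = \semvalue{\f(\overrightarrow{d})}^\alg{A}_{\struct{M}_i}$ by induction on the complexity of $\f$, with the inductive claim phrased uniformly over all stages: for every index $i$ and every tuple $\overrightarrow{d}$ of elements of $M_i$, the value of $\f$ agrees in $\struct{M}$ and in $\struct{M}_i$. I would treat the elementary and the merely quantifier-free assertions in parallel, splitting only at the quantifier step.

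For the base case of atomic formulas, their values are computed from the interpretations of the function and predicate symbols and of the identity, all of which coincide on tuples from $M_i$ because $\tuple{\alg{A},\struct{M}_i}$ is a substructure of $\tuple{\alg{A},\struct{M}}$ (this is exactly the substructure requirement, equivalently the preservation of quantifier-free formulas recorded in the Remark after the definition of substructure). The connective step is immediate: if $\f = c(\f_1,\ldots,\f_m)$ then both values are obtained by applying the same operation $\circ^\alg{A}$ to the values of the $\f_j$, which agree by the induction hypothesis. Since neither step invokes elementarity, together they already settle the \emph{moreover} clause for arbitrary chains and all quantifier-free $\f$.

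The crux is the quantifier step; I would do the existential case, the universal one being dual. The essential tool is that structures over the finite chain $\alg{A}$ are witnessed, so that $\semvalue{(\exists x)\psi(x,\overrightarrow{d})}^\alg{A}_{\struct{M}} = sup_{\leq^\alg{A}}\{\semvalue{\psi(e,\overrightarrow{d})}^\alg{A}_{\struct{M}} \mid e \in M\}$ is actually a maximum, attained at some $e^\ast \in M$. One inequality is cheap: restricting $e$ to $M_i$ and applying the induction hypothesis at stage $i$ gives $\semvalue{(\exists x)\psi}^\alg{A}_{\struct{M}_i} \leq^\alg{A} \semvalue{(\exists x)\psi}^\alg{A}_{\struct{M}}$, and this direction needs no elementarity. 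For the reverse inequality I would locate the witness: since $M = \bigcup_{i<\gamma} M_i$ is a chain, $e^\ast$ lies in some $M_j$ with $j \geq i$, and then the induction hypothesis applied at stage $j$ (with $e^\ast,\overrightarrow{d} \in M_j$) transfers $\semvalue{\psi(e^\ast,\overrightarrow{d})}^\alg{A}_{\struct{M}} = \semvalue{\psi(e^\ast,\overrightarrow{d})}^\alg{A}_{\struct{M}_j}$, whence $\semvalue{(\exists x)\psi}^\alg{A}_{\struct{M}} \leq^\alg{A} \semvalue{(\exists x)\psi}^\alg{A}_{\struct{M}_j}$. Finally, the elementarity of $\tuple{\alg{A},\struct{M}_i} \preccurlyeq \tuple{\alg{A},\struct{M}_j}$, which is exactly what being an elementary chain provides, yields $\semvalue{(\exists x)\psi}^\alg{A}_{\struct{M}_j} = \semvalue{(\exists x)\psi}^\alg{A}_{\struct{M}_i}$, closing the loop. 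The universal case runs symmetrically with infima and minima, the only change being that the easy inequality goes the other way.

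The main obstacle, and the one place where finiteness of $\alg{A}$ is genuinely indispensable, is precisely this location of the witness at a finite stage: without the witnessing property the extremal value in $\struct{M}$ need not be realized by any single domain element, so there would be no stage $M_j$ to descend to before applying elementarity. Everything else amounts to bookkeeping of truth values under the substructure and elementary-substructure relations.
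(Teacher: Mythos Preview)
The paper does not supply its own proof of this theorem; it merely recalls the result from \cite{BN18}. Your induction on formula complexity, with the hypothesis quantified over all stages $i$, is the standard argument and is correct in this setting.

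One small over-claim worth flagging: you say that finiteness of $\alg{A}$ (via witnessing) is ``genuinely indispensable'' in the quantifier step. It is convenient but not actually needed. Even if the supremum in $\struct{M}$ is not attained, you can bound it elementwise: for \emph{each} $e \in M$ choose $j \geq i$ with $e \in M_j$; then the induction hypothesis at stage $j$ gives $\semvalue{\psi(e,\overrightarrow{d})}^\alg{A}_{\struct{M}} = \semvalue{\psi(e,\overrightarrow{d})}^\alg{A}_{\struct{M}_j} \leq^\alg{A} \semvalue{(\exists x)\psi(\overrightarrow{d})}^\alg{A}_{\struct{M}_j} = \semvalue{(\exists x)\psi(\overrightarrow{d})}^\alg{A}_{\struct{M}_i}$, the last equality by elementarity. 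Taking the supremum over $e \in M$ yields the missing inequality without ever picking a single witness. So the Tarski--Vaught theorem itself does not rely on the finiteness hypothesis that governs the rest of the paper; your proof is valid as written, just not optimally sharp in its side remark.
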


Therefore, unions of elementary chains preserve the values of all formulas. It is also interesting to consider formulas that are preserved by {\em all} unions of chains.

\begin{Def} We say that a formula $\varphi(x_1,\ldots,x_n)$ is \emph{preserved under unions of chains} if whenever we have a chain of models $\{\tuple{\alg{A}, {\bf M}_i} \mid i < \gamma\}$   such that for every $i$, $\semvalue{\varphi(\overrightarrow{d})}^\alg{A}_{\struct{M}_i} = \1^\alg{A} ( i < \gamma)$ for some sequence $\overrightarrow{d}$ of elements of $M_0$, then $\semvalue{\varphi(\overrightarrow{d})}^\alg{A}_{\struct{M}} = \1^\alg{A}$, where $\tuple{\alg{ A}, {\bf M}}$ is the union of the chain.
\end{Def}

Let $a$ be the element of $\alg{A}$ immediately above $\0^\alg{A}$.

\begin{Lem}\label{l:preun1}
$\forall_2$-formulas are preserved under unions of chains.
\end{Lem}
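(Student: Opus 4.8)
The plan is to prove that $\forall_2$-formulas are preserved under unions of chains by a direct semantic argument that reduces the claim about a $\forall\exists$-formula to an application of the substructure-preservation behaviour already recorded in Lemma~\ref{pres} together with the definition of the union of a chain. Fix a $\forall_2$-formula $\varphi(\overrightarrow{x})$, which we may write as $(\forall \overrightarrow{y})\,\psi(\overrightarrow{x}, \overrightarrow{y})$ where $\psi$ is an $\exists_1$-formula, say $\psi(\overrightarrow{x}, \overrightarrow{y}) = (\exists \overrightarrow{z})\,\chi(\overrightarrow{x}, \overrightarrow{y}, \overrightarrow{z})$ with $\chi$ quantifier-free. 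Suppose $\{\tuple{\alg{A}, {\bf M}_i} \mid i < \gamma\}$ is a chain with union $\tuple{\alg{A}, {\bf M}}$, and suppose $\semvalue{\varphi(\overrightarrow{d})}^\alg{A}_{\struct{M}_i} = \1^\alg{A}$ for all $i < \gamma$, where $\overrightarrow{d}$ is a sequence from $M_0$. The goal is to show $\semvalue{\varphi(\overrightarrow{d})}^\alg{A}_{\struct{M}} = \1^\alg{A}$.

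First I would use the witnessing property of structures over a finite chain: since $\tuple{\alg{A},{\bf M}}$ is witnessed, the infimum defining the universal quantifier is attained, so $\semvalue{(\forall \overrightarrow{y})\psi(\overrightarrow{d}, \overrightarrow{y})}^\alg{A}_{\struct{M}}$ equals $\semvalue{\psi(\overrightarrow{d}, \overrightarrow{e})}^\alg{A}_{\struct{M}}$ for some tuple $\overrightarrow{e}$ of elements of the union's domain $M = \bigcup_{i<\gamma} M_i$. Because $\overrightarrow{e}$ is a finite tuple and the $M_i$ form a chain under inclusion, there is a single index $j < \gamma$ with $\overrightarrow{d}, \overrightarrow{e} \in M_j$. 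Thus it suffices to show $\semvalue{\psi(\overrightarrow{d}, \overrightarrow{e})}^\alg{A}_{\struct{M}} = \1^\alg{A}$, and since $\psi$ is existential, the natural route is to transfer a witness for $\psi$ from the stage ${\bf M}_j$ up to the union.

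The key step is then to establish that existential formulas are \emph{preserved upward} along the inclusion $\tuple{\alg{A}, {\bf M}_j} \subseteq \tuple{\alg{A}, {\bf M}}$. By hypothesis $\semvalue{\varphi(\overrightarrow{d})}^\alg{A}_{\struct{M}_j} = \1^\alg{A}$, so in particular $\semvalue{\psi(\overrightarrow{d}, \overrightarrow{e})}^\alg{A}_{\struct{M}_j} = \1^\alg{A}$, which by witnessedness in ${\bf M}_j$ gives a witness tuple $\overrightarrow{b} \in M_j$ with $\semvalue{\chi(\overrightarrow{d}, \overrightarrow{e}, \overrightarrow{b})}^\alg{A}_{\struct{M}_j} = \1^\alg{A}$. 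Since $\chi$ is quantifier-free and $\tuple{\alg{A}, {\bf M}_j}$ is a substructure of $\tuple{\alg{A}, {\bf M}}$, the agreement of truth values on quantifier-free formulas (the Remark following Definition~\ref{def cintula}, equivalently the quantifier-free part of the Tarski--Vaught theorem, Theorem~\ref{t:Unions}) yields $\semvalue{\chi(\overrightarrow{d}, \overrightarrow{e}, \overrightarrow{b})}^\alg{A}_{\struct{M}} = \1^\alg{A}$, whence $\semvalue{\psi(\overrightarrow{d}, \overrightarrow{e})}^\alg{A}_{\struct{M}} = \1^\alg{A}$ as the supremum over $M$ is at least the value attained at $\overrightarrow{b}$. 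Combining this with the first paragraph gives $\semvalue{\varphi(\overrightarrow{d})}^\alg{A}_{\struct{M}} = \1^\alg{A}$.

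The main obstacle, and the point deserving care, is the interaction between the two quantifiers: the universal quantifier forces us to pick the worst-case $\overrightarrow{e}$ in the union \emph{first}, and only afterwards can we descend to a finite stage $j$ containing $\overrightarrow{e}$; it would be a mistake to try to fix $j$ before knowing $\overrightarrow{e}$. The fact that the chain is directed by inclusion and that tuples are finite is exactly what lets this two-step choice go through, and the finiteness of $\alg{A}$ (hence witnessedness) is what guarantees the relevant suprema and infima are genuinely attained rather than merely approximated, which is essential for the witness-transfer argument to produce an honest element rather than a limit. I would present the argument in the order above — reduce the universal via witnessedness, locate a common stage, then transfer the existential witness upward through quantifier-free agreement — since this keeps the quantifier alternation transparent.
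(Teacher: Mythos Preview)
Your proof is correct and follows essentially the same route as the paper: locate a stage $j$ of the chain containing the relevant parameters, use witnessedness of $\tuple{\alg{A},\struct{M}_j}$ to extract a witness for the existential quantifier, and push the quantifier-free matrix up to the union via the agreement of quantifier-free values between a structure and its extensions. The only minor difference is that you invoke witnessedness of the union $\tuple{\alg{A},\struct{M}}$ to reduce the outer universal to a single worst-case instance $\overrightarrow{e}$, whereas the paper simply proves $\semvalue{\psi(\overrightarrow{d},\overrightarrow{e})}^\alg{A}_{\struct{M}} = \1^\alg{A}$ for \emph{every} $\overrightarrow{e}\in M$ directly; your extra appeal to witnessedness in $\struct{M}$ is harmless but unnecessary.
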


\begin{proof} Let $ (\forall \overrightarrow{x})(\exists \overrightarrow{y}) \phi$ be a $\forall_2$-formula, $\langle\emph{\textbf{A}},\mathrm{\mathbf{M}}\rangle$ be the union of a chain $\{\langle\emph{\textbf{A}},\mathrm{\mathbf{M}_i}\rangle \mid i < \gamma \}$, and $\overrightarrow{c}$ some sequence of elements of $M_0$. Assume that for every $i < \gamma$, $ \semvalue{(\forall \overrightarrow{x})(\exists \overrightarrow{y}) \phi(\overrightarrow{c})}^\alg{A}_{\mathbf{M}_i} = \1^\alg{A}$. Let $\overrightarrow{d} \in M$, we show that

$ \semvalue{(\exists \overrightarrow{y}) \phi(\overrightarrow{d}, \overrightarrow{c})}^\alg{A}_{\mathbf{M}} = \1^\alg{A}$. Let $j< \gamma$ be such that $\overrightarrow{d} \in M_j$. Since $ \semvalue{(\forall \overrightarrow{x})(\exists \overrightarrow{y}) \phi(\overrightarrow{c})}^\alg{A}_{\mathbf{M}_j} = \1^\alg{A}$ we have $\semvalue{(\exists \overrightarrow{y}) \phi(\overrightarrow{d},\overrightarrow{c})}^\alg{A}_{\mathbf{M}_j} = \1^\alg{A}$. Since $\langle\emph{\textbf{A}},\mathrm{\mathbf{M}_j}\rangle$ is $\exists$-witnessed, there are $\overrightarrow{e} \in M_j$ such that $\semvalue{ \phi(\overrightarrow{d},\overrightarrow{e},\overrightarrow{c})}^\alg{A}_{\mathbf{M}_j} = \1^\alg{A}$. Therefore $\semvalue{ \phi(\overrightarrow{d},\overrightarrow{e}, \overrightarrow{c})}^\alg{A}_{\mathbf{M}} = \1^\alg{A}$, because extensions preserve quantifier-free formulas, and then clearly $$\semvalue{ (\exists \overrightarrow{y})\phi(\overrightarrow{d},\overrightarrow{c})}^\alg{A}_{\mathbf{M}} = \1^\alg{A}.$$ We can conclude that for every $\overrightarrow{d} \in M$, $$\semvalue{ (\exists \overrightarrow{y})\phi(\overrightarrow{d}, \overrightarrow{c})}^\alg{A}_{\mathbf{M}} = \1^\alg{A},$$ and, hence, $ \semvalue{(\forall \overrightarrow{x})(\exists \overrightarrow{y}) \phi(\overrightarrow{c})}^\alg{A}_{\mathbf{M}} = \1^\alg{A}$.\qed\end{proof}

Next we provide the amalgamation result that will allow us to prove a version of Chang--\L o\'s--Suszko theorem for graded model theory.

\begin{Pro}\emph{($\exists_2$-amalgamation)}\label{am2} Let $\tuple{\textbf{A}, {\bf M}_1}$ and $\tuple{\textbf{A}, {\bf M}_2}$ be two structures for $\mathcal{P}^{\textbf{A}}$ with a common part $\tuple{\textbf{A}, {\bf M}}$ with domain generated by a sequence of elements $\overrightarrow{d}$. Moreover, suppose that  $$   \tuple{\textbf{A}, {\bf M}_2, \overrightarrow{d}} \Rrightarrow_{\exists_2} \tuple{\textbf{A}, {\bf M}_1, \overrightarrow{d}}. $$ Then, there is a structure $\tuple{\textbf{A}, {\bf N}}$ into which $\tuple{\textbf{A}, {\bf M}_2}$ can be strongly embedded by $\tuple{f, g}$ preserving all $\forall_1$-formulas, while $\tuple{\textbf{A}, {\bf M}_1}$ is $\mathcal{P}^{\textbf{A}}$-elementarily strongly embedded (taking isomorphic copies, we may assume that $\tuple{\textbf{A}, {\bf M}_1}$ is just a  $\mathcal{P}^{\textbf{A}}$-elementary substructure).  The situation is described by the following picture:
\begin{center}
\begin{tikzpicture}
    \node (E) at (0,0) {};
    \node[above=of E] (F) {$\tuple{\textbf{A}, {\bf N}}$};
        \node[below=of E] (C) {$\tuple{\textbf{A}, {\bf M}}$};

 \node[midway, below, left=of E] (A) {$\tuple{\textbf{A}, {\bf M}_2, \overrightarrow{d}}$};
  \node[midway, below, right=of E] (B) {$\tuple{\textbf{A}, {\bf M}_1, \overrightarrow{d}}$};
    \node[below=of E] (Asubt) {};
  
    \draw[->, dashed] (A)--(F) node [midway, below,  left=of E] {$\tuple{f, g}$ };
\draw[->, dashed] (B)--(F) node [midway, below, right=of E] {  $\preccurlyeq$};
\draw  (B) (A) node [midway,above] {$\Rrightarrow_{\exists_2}$};
\draw[->] (C)--(B) node [midway, below, right=of E] {  $\subseteq$};
\draw[->] (C)--(A) node [midway, below, left=of E] {  $\subseteq$};
\end{tikzpicture}
\end{center}
Moreover, the result is also true when $\tuple{\textbf{A}, {\bf M}_1}$ and $\tuple{\textbf{A}, {\bf M}_2}$ have no common part.
\end{Pro}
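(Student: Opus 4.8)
The plan is to follow the proof of Proposition~\ref{am} in its overall shape, the only change being that the quantifier-free diagram of $\tuple{\textbf{A}, {\bf M}_2}$ is replaced by its \emph{universal diagram}. First I would introduce $\mbox{Diag}_{\forall_1}(\textbf{A}, {\bf M}_2)$, the set of all $\forall_1$-sentences of $\mathcal{P}^{\langle\textbf{A}, {\bf M}_2\rangle}$ taking value $\1^\textbf{A}$ in ${\bf M}_2^\sharp$; note that $\mbox{Diag}(\textbf{A}, {\bf M}_2) \subseteq \mbox{Diag}_{\forall_1}(\textbf{A}, {\bf M}_2)$, since the biconditionals $\sigma \leftrightarrow \overline{a}$ in the former are quantifier-free and take value $\1^\textbf{A}$. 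The goal then reduces to showing that $\mbox{ElDiag}(\textbf{A}, {\bf M}_1) \cup \mbox{Diag}_{\forall_1}(\textbf{A}, {\bf M}_2)$ has a model $\tuple{\textbf{A}, {\bf N}}$ (letting the elements of the domains name themselves): by Corollary~\ref{lemma cor}, a model of $\mbox{ElDiag}(\textbf{A}, {\bf M}_1)$ makes $\tuple{\textbf{A}, {\bf M}_1}$ a $\mathcal{P}^{\textbf{A}}$-elementary substructure, while a model of $\mbox{Diag}(\textbf{A}, {\bf M}_2)$ yields a strong embedding $\tuple{f, g}$ of $\tuple{\textbf{A}, {\bf M}_2}$; moreover, since every value-$\1^\textbf{A}$ $\forall_1$-sentence over ${\bf M}_2$ lies in the universal diagram and hence holds in $\tuple{\textbf{A}, {\bf N}}$, the embedding $\tuple{f,g}$ preserves $\forall_1$-formulas, exactly as required.

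For the consistency I would argue by contradiction, lifting the argument of Proposition~\ref{am} one quantifier level up. If the set had no model, then by compactness for the finite chain $\textbf{A}$ there would be finitely many sentences $\sigma_1, \dots, \sigma_k \in \mbox{Diag}_{\forall_1}(\textbf{A}, {\bf M}_2)$ with
$$\mbox{ElDiag}(\textbf{A}, {\bf M}_1) \vDash (\sigma_1 \wedge \dots \wedge \sigma_k) \rightarrow \overline{a},$$
where $a$ is the immediate predecessor of $\1^\textbf{A}$. Writing each $\sigma_i$ in prenex form and pulling the universal quantifiers out of the (min-)conjunction, the antecedent becomes a single $\forall_1$-sentence $(\forall \overrightarrow{x}) \theta(\overrightarrow{x}, \overrightarrow{z}, \overrightarrow{d})$, where $\overrightarrow{d}$ names the common part and $\overrightarrow{z}$ names the finitely many elements of $M_2 \setminus M$ occurring. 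Since the constants $\overrightarrow{z}$ do not occur in $\mbox{ElDiag}(\textbf{A}, {\bf M}_1)$, I would quantify them away existentially, obtaining
$$\mbox{ElDiag}(\textbf{A}, {\bf M}_1) \vDash (\exists \overrightarrow{z})(\forall \overrightarrow{x}) \theta(\overrightarrow{x}, \overrightarrow{z}, \overrightarrow{d}) \rightarrow \overline{a};$$
this step is legitimate over a finite chain because suprema are attained maxima and all the relevant values are bounded by $a$. The crucial gain over Proposition~\ref{am} is that the resulting sentence $(\exists \overrightarrow{z})(\forall \overrightarrow{x}) \theta$ is now an $\exists_2$-formula rather than an $\exists_1$-one.

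To close, I would evaluate this sentence in ${\bf M}_1$ and ${\bf M}_2$. As $\tuple{\textbf{A}, {\bf M}_1}$ models its own elementary diagram, the displayed consequence forces $\semvalue{(\exists \overrightarrow{z})(\forall \overrightarrow{x})\theta[\overrightarrow{d}]}^{\textbf{A}}_{{\bf M}_1} \leq a$, so $\tuple{\textbf{A}, {\bf M}_1} \not\models (\exists \overrightarrow{z})(\forall \overrightarrow{x})\theta[\overrightarrow{d}]$. On the other hand, interpreting $\overrightarrow{z}$ by the very elements of $M_2 \setminus M$ they name witnesses $\semvalue{(\exists \overrightarrow{z})(\forall \overrightarrow{x})\theta[\overrightarrow{d}]}^{\textbf{A}}_{{\bf M}_2} = \1^\textbf{A}$, because $\sigma_1 \wedge \dots \wedge \sigma_k$ has value $\1^\textbf{A}$ in ${\bf M}_2$ by the choice of the $\sigma_i$. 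The hypothesis $\tuple{\textbf{A}, {\bf M}_2, \overrightarrow{d}} \Rrightarrow_{\exists_2} \tuple{\textbf{A}, {\bf M}_1, \overrightarrow{d}}$ then forces $\tuple{\textbf{A}, {\bf M}_1} \models (\exists \overrightarrow{z})(\forall \overrightarrow{x})\theta[\overrightarrow{d}]$, a contradiction. The case with no common part is handled \emph{mutatis mutandis}, as in Proposition~\ref{am}. I expect the main obstacle to be not the consistency argument, which is a routine lift of the $\exists_1$ case, but the bookkeeping that makes it run: verifying that the universal diagram is the right object — in particular that imposing only the value-$\1^\textbf{A}$ universal sentences is neither too weak to yield $\forall_1$-preservation nor too strong to be consistent — and that the prenexing and the quantifying-away respect the graded semantics so that one lands precisely on an $\exists_2$-sentence to which the hypothesis applies.
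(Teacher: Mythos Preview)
Your proposal is correct and follows essentially the same approach as the paper: define the universal diagram $\mbox{Diag}_{\forall_1}(\textbf{A}, {\bf M}_2)$, show that its union with $\mbox{ElDiag}(\textbf{A}, {\bf M}_1)$ is satisfiable by a compactness-and-contradiction argument, and observe that quantifying away the constants from $M_2 \setminus M$ produces an $\exists_2$-sentence to which the hypothesis applies. Your write-up is in fact more explicit than the paper's on two points the paper leaves tacit --- that $\mbox{Diag}(\textbf{A}, {\bf M}_2) \subseteq \mbox{Diag}_{\forall_1}(\textbf{A}, {\bf M}_2)$ (so the resulting map really is a strong embedding, and is $\forall_1$-preserving), and that the conjunction of finitely many $\forall_1$-sentences must be put into prenex form before the existential quantification over the new constants yields a genuine $\exists_2$-formula.
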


\begin{proof} Let $\mbox{Diag}_{\forall_1}(\emph{\textbf{A}}, {\bf M}_2)$ be the collection of all $\forall_1$-formulas in the language of the diagram of $\tuple{\emph{\textbf{A}}, {\bf M}_2}$ (where we let the elements of the domain serve as constants to name themselves) that hold in said structure. It is not a difficult to show that $  \mbox{ElDiag}(\emph{\textbf{A}}, {\bf M}_1) \cup \mbox{Diag}_{\forall_1}(\emph{\textbf{A}}, {\bf M}_2)$ (where again we let the elements of the domain serve as constants to name themselves) has a model, which suffices for the purposes of the result. For suppose  otherwise, that is, for some finite $$\mbox{Diag}_{\forall_1 0}(\emph{\textbf{A}}, {\bf M}_2) \subseteq \mbox{Diag}_{\forall_1}(\emph{\textbf{A}}, {\bf M}_2),$$ we have that $$ \mbox{ElDiag}(\emph{\textbf{A}}, {\bf M}_1) \vDash (\bigwedge  \mbox{Diag}_{\forall_1 0}(\emph{\textbf{A}}, {\bf M}_2))  \rightarrow  \overline{a}$$ for some $a \neq \1^\emph{\textbf{A}}$ (the supremum of all the values taken by $\bigwedge  \mbox{Diag}_{\forall_1 0}(\emph{\textbf{A}}, {\bf M}_2)$ in $\emph{\textbf{A}}$). Quantifying away the new individual constants,  $$ \mbox{ElDiag}(\emph{\textbf{A}}, {\bf M}_1) \vDash (\forall \overrightarrow{x})((\bigwedge  \mbox{Diag}_{\forall_1 0}^*(\emph{\textbf{A}}, {\bf M}_2))  \rightarrow  \overline{a}),$$ so $$ \mbox{ElDiag}(\emph{\textbf{A}}, {\bf M}_1) \vDash (\exists \overrightarrow{x}) ((\bigwedge  \mbox{Diag}_{\forall_1 0}^*(\emph{\textbf{A}}, {\bf M}_2)) ) \rightarrow  \overline{a}$$

Since  $ \tuple{\emph{\textbf{A}}, {\bf M}_2, \overrightarrow{d}}  \Rrightarrow_{\exists_2}  \tuple{\emph{\textbf{A}}, {\bf M}_1, \overrightarrow{d}} $, then  $$ \tuple{\emph{\textbf{A}}, {\bf M}_2, \overrightarrow{d}} \not \models (\exists \overrightarrow{x}) (\bigwedge  \mbox{Diag}_{\forall_1 0}^*(\emph{\textbf{A}}, {\bf M}_2)),$$ which is a contradiction.\qed\end{proof}

Now we are ready to prove the promised analogue of~\cite[Theorem 1.2]{ro}.
\begin{Thm}\emph{(Chang--\L o\'s--Suszko preservation theo.)}\label{Chang-Los-Suszko}  Let $T$ be a theory and $\Phi(\overrightarrow{x})$ a set of formulas in $\pl^{\alg{A}}$. 
Then, the following are equivalent:
\begin{itemize}
 \item[(i)] $\Phi(\overrightarrow{x})$ is preserved under unions of chains of models of $T$.
 \item[(ii)]  $\Phi(\overrightarrow{x})$ is 1-equivalent modulo  $T$  to a set of\/ $\forall_2$-formulas.
 \end{itemize}
\end{Thm}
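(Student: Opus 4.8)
For the final statement (the Chang--\L o\'s--Suszko preservation theorem), the plan is to prove the two implications separately, in close analogy with the \L o\'s--Tarski theorem (Theorem~\ref{thm semantic interpolation}): the direction (ii)$\Rightarrow$(i) is soft, while (i)$\Rightarrow$(ii) carries the weight and rests on an amalgamation-plus-chain construction. Throughout I treat the free variables $\overrightarrow{x}$ as new object constants naming a fixed tuple $\overrightarrow{d}$, exactly as in the proof of Theorem~\ref{thm semantic interpolation}. For (ii)$\Rightarrow$(i): since $\Phi$ agrees modulo $T$ with a set of $\forall_2$-formulas and the latter are preserved under unions of chains by Lemma~\ref{l:preun1}, one reads off at once that $\Phi$ is preserved under unions of chains of models of $T$.

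For the substantial direction (i)$\Rightarrow$(ii) I take as candidate the set $\Phi^{*}=(T\cup\Phi(\overrightarrow{x}))_{\forall_2}$ of all $\forall_2$-consequences of $T\cup\Phi$. As $T\cup\Phi\vDash\Phi^{*}$ is immediate, it suffices to show that every model $\tuple{\alg{A},\struct{M}}$ of $T\cup\Phi^{*}$ satisfies $\Phi(\overrightarrow{d})$. The idea is to realise $\tuple{\alg{A},\struct{M}}$ as a $\pl^{\alg{A}}$-elementary substructure of the union of an increasing chain of models of $T\cup\Phi$, so that hypothesis (i) can be invoked. Concretely, I build interleaved chains
\[
\tuple{\alg{A},\struct{M}}=\tuple{\alg{A},\struct{M}_0}\subseteq\tuple{\alg{A},\struct{N}_0}\subseteq\tuple{\alg{A},\struct{M}_1}\subseteq\tuple{\alg{A},\struct{N}_1}\subseteq\cdots
\]
in which every $\tuple{\alg{A},\struct{N}_i}$ is a model of $T\cup\Phi$, every $\tuple{\alg{A},\struct{M}_i}$ is a $\pl^{\alg{A}}$-elementary extension of its predecessor (hence still a model of $\Phi^{*}$), produced by two alternating moves.

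\emph{Move A} passes from $\struct{M}_i$ to $\struct{N}_i$. I adjoin to $T\cup\Phi$ the set $\Delta_i$ of all $\forall_1$-formulas in the diagram language of $\struct{M}_i$ that hold in $\struct{M}_i$, and show $T\cup\Phi\cup\Delta_i$ is satisfiable by compactness over the finite chain $\alg{A}$. Were it not, a finite conjunction $\delta(\overrightarrow{m})=(\forall\overrightarrow{x})\theta$ of members of $\Delta_i$ would yield $T\cup\Phi\vDash\delta(\overrightarrow{m})\to\overline{a}$, with $a$ the immediate predecessor of $\1^{\alg{A}}$; replacing the constants $\overrightarrow{m}$ by universally quantified variables and using $(\forall\overrightarrow{x})\theta\to\overline{a}\equiv(\exists\overrightarrow{x})(\theta\to\overline{a})$, this is a genuine $\forall_2$-consequence of $T\cup\Phi$, so it lies in $\Phi^{*}$ and must hold in $\struct{M}_i$, contradicting $\struct{M}_i\models\delta(\overrightarrow{m})$. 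Any model $\struct{N}_i$ of $T\cup\Phi\cup\Delta_i$ then contains $\struct{M}_i$ as a substructure (Corollary~\ref{lemma cor}) and satisfies $\tuple{\alg{A},\struct{N}_i}\Rrightarrow_{\exists_1}\tuple{\alg{A},\struct{M}_i}$. \emph{Move B} feeds this relation into the existential amalgamation theorem (Proposition~\ref{am}), placing $\struct{M}_i$ in the elementarily embedded slot and $\struct{N}_i$ in the strongly embedded one; this produces a $\pl^{\alg{A}}$-elementary extension $\tuple{\alg{A},\struct{M}_{i+1}}$ of $\tuple{\alg{A},\struct{M}_i}$ into which $\struct{N}_i$ embeds as a substructure, so the elementary chain $\{\struct{M}_i\}$ advances while absorbing each $\struct{N}_i$.

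To close the argument I observe that, since $\struct{N}_i\subseteq\struct{M}_{i+1}\subseteq\struct{N}_{i+1}$, the structures $\struct{N}_i$ form a genuine chain of models of $T\cup\Phi$ whose union coincides with the union $\tuple{\alg{A},\struct{U}}$ of the elementary chain $\{\struct{M}_i\}$. Hypothesis (i) gives $\tuple{\alg{A},\struct{U}}\models\Phi(\overrightarrow{d})$, while the Tarski--Vaught theorem (Theorem~\ref{t:Unions}) gives $\tuple{\alg{A},\struct{M}}\preccurlyeq\tuple{\alg{A},\struct{U}}$, whence $\semvalue{\Phi(\overrightarrow{d})}^{\alg{A}}_{\struct{M}}=\1^{\alg{A}}$, as required. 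I expect the main obstacle to be \emph{Move A}: one must verify that the prenex and residuation manipulations turning the refutation into a $\forall_2$-consequence are legitimate in $\mathrm{MTL}\forall$, and here the finiteness (hence witnessedness) of $\alg{A}$ is essential, both for the equivalence $(\forall\overrightarrow{x})\theta\to\overline{a}\equiv(\exists\overrightarrow{x})(\theta\to\overline{a})$ and for the collapse of $1$-satisfiability into the consequence relation $\vDash$ on which the bookkeeping relies. A secondary delicate point, and the reason an elementary (rather than merely $\forall_1$-preserving) chain is needed over $\struct{M}$, is that $\Phi$ is a set of formulas of arbitrary quantifier complexity: only a full elementary embedding $\struct{M}\preccurlyeq\struct{U}$ transfers the value of $\Phi(\overrightarrow{d})$ back down to $\struct{M}$.
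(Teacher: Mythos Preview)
Your proof is correct and follows essentially the same route as the paper: build an elementary tower $\struct{M}_0\preccurlyeq\struct{M}_1\preccurlyeq\cdots$ interleaved with models $\struct{N}_i$ of $T\cup\Phi$, then take the common union and invoke Theorem~\ref{t:Unions} together with hypothesis~(i). The only difference is organizational: where the paper first produces (by compactness over $(T\cup\Phi)_{\forall_2}$) a model $\struct{M}_i'$ of $T\cup\Phi$ with $\tuple{\alg{A},\struct{M}_i}\Rrightarrow_{\exists_2}\tuple{\alg{A},\struct{M}_i'}$ and then invokes Proposition~\ref{am2} ($\exists_2$-amalgamation) to obtain $\struct{N}_i$, your \emph{Move A} carries out the equivalent compactness argument directly on $T\cup\Phi\cup\Delta_i$, effectively inlining the proof of Proposition~\ref{am2}. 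After that, both arguments use Proposition~\ref{am} in exactly the same way to produce $\struct{M}_{i+1}$, and the conclusion via the union of chains is identical.
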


\begin{proof} Once more, we only deal with the non-trivial direction of the equivalence.  Consider $(T \cup \Phi(\overrightarrow{x}))_{\forall_2}$. We want to show that $$ T \cup (T \cup \Phi(\overrightarrow{x}))_{\forall_2} \vDash \Phi(\overrightarrow{x}),$$ which will suffice to establish the theorem. The strategy is establish that any model of $ T \cup (T \cup \Phi(\overrightarrow{x}))_{\forall_2}$ has an elementary extension which is a union of $\omega$-many models of $\Phi(\overrightarrow{x})$, so by hypothesis, $\Phi(\overrightarrow{x})$ will hold there, and hence back in our original model of $ T \cup (T \cup \Phi(\overrightarrow{x}))_{\forall_2}$.

So we start with $\tuple{\alg{A}, \struct{M}_0}$ being an arbitrary model of $ T \cup (T \cup \Phi(\overrightarrow{x}))_{\forall_2}$. Now assuming that we have $\tuple{\alg{A}, \struct{M}_i}$ which is an elementary extension of $\tuple{\alg{A}, \struct{M}_0}$. We first need to  find a model $\tuple{\alg{A}, \struct{M}_i^{\prime}}$ of the theory $T \cup \Phi(\overrightarrow{x})$ such that  $\tuple{\alg{A}, \struct{M}_i} \Rrightarrow_{\exists_2} \tuple{\alg{A}, \struct{M}_i^{\prime}}$ and then quote the $\exists_2$-amalgamation theorem to obtain a model $\tuple{\alg{A}, \struct{N}_i}$ of $T \cup \Phi(\overrightarrow{x})$ into which $\tuple{\alg{A}, \struct{M}_i}$ can be strongly embedded in such a way that all  $\forall_1$-formulas are preserved by such strong embedding.

Let $U$ be all $\exists_2$-formulas that hold in $\tuple{\alg{A}, \struct{M}_i}$. We claim  then   that $T  \cup \Phi(\overrightarrow{x}) \cup U$ has a model. Otherwise by compactness, for 
\begin{equation}
\resizebox{0.5\textwidth}{!}{$\{(\exists \overrightarrow{x}_0)(\forall \overrightarrow{y}_0) \phi_0(\overrightarrow{x}_0, \overrightarrow{y}_0), \dots,  (\exists \overrightarrow{x}_n)( \forall \overrightarrow{y}_n) \phi_0(\overrightarrow{x}_n, \overrightarrow{y}_n)\} \subseteq U$}\nonumber
\end{equation}
we have that in all models of $T$, it holds that 
\begin{equation}
\resizebox{0.5\textwidth}{!}{$  (\exists \overrightarrow{x}_0 )(\forall \overrightarrow{y}_0)  \phi_0(\overrightarrow{x}_0, \overrightarrow{y}_0) \wedge \dots \wedge (\exists \overrightarrow{x}_n) (\forall \overrightarrow{y}_n) \phi_0(\overrightarrow{x}_n, \overrightarrow{y}_n) \rightarrow \overline{a}$}\nonumber
\end{equation}
 where $a$ is the immediate predecessor of $\1^\alg{A}$ and by basic manipulations, 
\begin{equation}
\resizebox{0.5\textwidth}{!}{$ (\exists \overrightarrow{x}_0, \dots, \overrightarrow{x}_n )(\forall \overrightarrow{y}_0, \dots, \overrightarrow{y}_n )(\phi_0(\overrightarrow{x}_0, \overrightarrow{y}_0)  \wedge \dots \wedge \phi_0(\overrightarrow{x}_n, \overrightarrow{y}_n)) \rightarrow \overline{a}.$}\nonumber
\end{equation}
The latter formula must be in $(T \cup \Phi(\overrightarrow{x}))_{\forall_2}$ then, which is a contradiction.

Now,$\tuple{\alg{A}, \struct{N}_i}$ is also such that for a listing $\overrightarrow{d}$ of all the elements of $\tuple{\alg{A}, \struct{M}_i}$, $\tuple{\alg{A}, \struct{N}_i, \overrightarrow{d}} \Rrightarrow_{\exists_1} \tuple{\alg{A}, \struct{M}_i, \overrightarrow{d}}$. To prove the contrapositive,  suppose that $$\tuple{\alg{A}, \struct{M}_i, \overrightarrow{d}} \models (\exists \overrightarrow{x}) \f(\overrightarrow{x}, \overrightarrow{d}) \rightarrow \overline{a}$$ where $a$ is the immediate predecessor of $\1^\alg{A}$ in the linear order of $\alg{A}$. But then $$\tuple{\alg{A}, \struct{M}_i, \overrightarrow{d}} \models (\forall \overrightarrow{x}) (\f(\overrightarrow{x}, \overrightarrow{d}) \rightarrow \overline{a}),$$ so indeed, $$\tuple{\alg{A}, \struct{N}_i, \overrightarrow{d}} \models (\forall \overrightarrow{x}) (\f(\overrightarrow{x}, \overrightarrow{d}) \rightarrow \overline{a}),$$ and, hence, $$\tuple{\alg{A}, \struct{N}_i, \overrightarrow{d}} \models (\exists \overrightarrow{x}) \f(\overrightarrow{x}, \overrightarrow{d}) \rightarrow \overline{a}.$$

Now using the existential amalgamation theorem we can obtain a structure  $\tuple{\alg{A}, \struct{M}_{i+1}}$ as an elementary extension of $\tuple{\alg{A}, \struct{M}_{i}}$ into which  $\tuple{\alg{A}, \struct{N}_{i}}$ can be strongly embedded. Now just take the union $\tuple{\alg{A}, \bigcup_{i\in \omega} \struct{M}_i}= \tuple{\alg{A}, \bigcup_{i\in \omega} \struct{N}_i}$ and apply Theorem~\ref{t:Unions}.\qed
\end{proof}

As a consequence, we can again obtain a result for single formulas, using the compactness of the consequence relation.

\begin{Cor} Let $T$ be a theory in $\pl^{\alg{A}}$ and $\f$ a  formula. 
Then, the following are equivalent:
\begin{itemize}
 \item[(i)] $\f$ is preserved under unions of chains of models of $T$.
 \item[(ii)]  $\f$ is 1-equivalent modulo $T$  to a set of\/ $\forall_2$-formulas.
 \end{itemize}
\end{Cor}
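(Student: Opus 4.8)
The plan is to obtain this corollary as the singleton instance $\Phi=\{\f\}$ of Theorem~\ref{Chang-Los-Suszko}, checking that for a one-element family the two clauses specialize exactly to (i) and (ii) above. For a single $\f$, a chain $\{\tuple{\alg{A},\struct{M}_i}\mid i<\gamma\}$ of models of $T$ preserves the set $\{\f\}$ precisely when it preserves the formula $\f$ in the sense of the definition of preservation under unions of chains; and $\{\f\}$ being $1$-equivalent modulo $T$ to a set of $\forall_2$-formulas is, verbatim, $\f$ being so. Hence both implications of the corollary will follow once the theorem is specialized, and the only thing to verify is this matching of notions.

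Concretely, for the substantive direction (i)$\Rightarrow$(ii) I would feed $\Phi=\{\f\}$ into Theorem~\ref{Chang-Los-Suszko} and read off a set $\Theta(\overrightarrow{x})$ of $\forall_2$-formulas that is $1$-equivalent to $\f$ modulo $T$. For the converse (ii)$\Rightarrow$(i) I would reproduce the easy direction of the theorem: the members of $\Theta$ are preserved under unions of chains by Lemma~\ref{l:preun1}, and, modulo $T$, the value $\1^\alg{A}$ of $\f$ is governed by the simultaneous value $\1^\alg{A}$ of the formulas of $\Theta$, so that preservation of $\Theta$ along a chain of models of $T$ forces preservation of $\f$.

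Should one wish to sharpen the conclusion in the manner of Corollary~\ref{tarski}, the set $\Theta$ can be compressed: since $T,\Theta\vDash\f$, the $\alg{A}$-compactness of the consequence relation extracts a finite $\Theta_0\subseteq\Theta$ with $T,\Theta_0\vDash\f$, and the conjunction $\bigwedge\Theta_0$ is again $1$-equivalent to $\f$ modulo $T$. I expect the only point requiring genuine care — and thus the nearest thing to an obstacle — to be the verification that $\bigwedge\Theta_0$ is itself $1$-equivalent to an honest $\forall_2$-formula, which amounts to the prenex shift making $(\forall\overrightarrow{x}_1)(\exists\overrightarrow{y}_1)\phi_1\wedge(\forall\overrightarrow{x}_2)(\exists\overrightarrow{y}_2)\phi_2$ $1$-equivalent to $(\forall\overrightarrow{x}_1,\overrightarrow{x}_2)(\exists\overrightarrow{y}_1,\overrightarrow{y}_2)(\phi_1\wedge\phi_2)$; this holds because structures over the finite chain $\alg{A}$ are witnessed, so the relevant suprema and infima are attained at single witnesses. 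Since clause (ii) already permits a set of $\forall_2$-formulas, this last step is optional, and the real weight of the argument rests entirely on Theorem~\ref{Chang-Los-Suszko}.
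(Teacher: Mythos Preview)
Your proposal is correct and matches the paper's approach: the corollary is obtained by instantiating Theorem~\ref{Chang-Los-Suszko} with $\Phi=\{\f\}$, and your observation that the compactness step is optional here is on target, since clause~(ii) as stated allows a \emph{set} of $\forall_2$-formulas rather than a single one. The paper's prefatory remark about ``using the compactness of the consequence relation'' suggests the authors may have intended (ii) to read ``a $\forall_2$-formula'' in parallel with Corollary~\ref{tarski}, but for the statement as written your specialization argument is exactly what is needed.
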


\section{Conclusions}\label{con}
In this paper we have provided some necessary steps in the systematic study of syntactic characterizations of classes of graded structures and their corresponding preservations theorems. Work in progress in the same line includes the study of the universal Horn fragment of predicate fuzzy logics and the classes axiomatized by sets of Horn clauses. Moreover, in the general endeavor of graded model theory, we believe that, among others, future works should focus on the study of types, with the construction of saturated models and type-omission theorems, the study of particular kinds of graded structures that are relevant for computer science applications and, also, the development of Lindstr\"om-style characterization theorems for  predicate fuzzy logics that may lead to the creation of a non-classical abstract model theory.

\begin{acknowledgements}
The authors are indebted to two anonymous referees and to the editor for their critical and interesting remarks that have helped improving the presentation the paper.
\end{acknowledgements}

\section*{Compliance with ethical standards}
\noindent{\bf Funding:} Costa, Dellunde, and Noguera received funding from the European Union's Horizon 2020 research and innovation program under the Marie  Curie grant agreement No 689176 (SYSMICS project). Badia  is supported by the project I 1923-N25 (\emph{New perspectives on residuated posets}) of the Austrian Science Fund (FWF).  Costa is also supported by the grant for the recruitment of early-stage research staff (FI-2017) from the Generalitat de Catalunya. Dellunde is also partially supported by the project RASO TIN2015-71799-C2-1-P, CIMBVAL TIN2017-89758-R, and the grant 2017SGR-172 from the Generalitat de Catalunya. The research leading to these results has received funding from  
AppPhil-RecerCaixa. Finally, Noguera is also supported by the project GA17-04630S of the Czech Science Foundation (GA\v{C}R).

\smallskip

\noindent{\bf Conflict of interest:} The authors declare they have no conflict of interest.

\smallskip 

\noindent This article does not contain any studies with human participants or animals performed by
any of the authors.

\end{document}